\newtheorem{theorem}{Theorem}[section]
\newtheorem{lemma}{Lemma}[section]
\newtheorem{proposition}{Proposition}[section]
\newtheorem{corollary}{Corollary}[section]
\theoremstyle{definition}
\theoremstyle{remark}
\newtheorem{remark}{Remark}[section]
\numberwithin{equation}{section}
\DeclareMathOperator{\E}{E}
\DeclareMathOperator{\prob}{P}
\DeclareMathOperator{\lito}{o}
\newcommand{\comment}[1]{}
\newcommand{\xinf}{X_{(\infty)}}
\newcommand{\xupk}{\sum_{t=m+1}^\infty \Theta_t X_t}
\newcommand{\xdnk}{\sum_{t=1}^m \Theta_t X_t}
\newcommand{\xtheta}{\Theta_tX_t}
\begin{document}
\bibliographystyle{natbib}
\title{Tail Behavior of Randomly Weighted Sums}

\author[R. S. Hazra]{Rajat Subhra Hazra}
\address{Rajat Subhra Hazra\\Statistics and Mathematics Unit\\
Indian Statistical Institute\\ 203 B.T.~Road\\ Kolkata 700108\\
India} \email{rajat\_r@isical.ac.in}

\author[K. Maulik]{Krishanu Maulik}
\address{Krishanu Maulik\\ Statistics and Mathematics Unit\\ Indian
Statistical Institute\\ 203 B.T.~Road\\ Kolkata 700108\\ India}
\email{krishanu@isical.ac.in}

\keywords{Regular variation, heavy tails, asymptotic independence, Breiman's theorem, product of random variables, subexponential}

\subjclass[2000]{Primary 60G70; Secondary 62G32}

\begin{abstract}
Let $\{X_t, t \geq 1\}$ be a sequence of identically distributed and pairwise asymptotically independent random variables with regularly varying tails and $\{ \Theta_t, t\geq1 \}$ be a sequence of positive random variables independent of the sequence $\{X_t, t \geq 1\}$. We shall discuss the tail probabilities and almost sure convergence of $\xinf=\sum_{t=1}^{\infty}\Theta_t X_t^{+}$ (where $X^+=\max\{0,X\}$) and $\max_{1\leq k<\infty} \sum_{t=1}^{k}\Theta_t X_t$ and provide some sufficient conditions motivated by~\cite{denisov:zwart:2007} as alternatives to the usual moment conditions. In particular, we illustrate how the conditions on the slowly varying function involved in the tail probability of $X_1$ helps to control the tail behavior of the randomly weighted sums.

Note that, the above results allow us to choose $X_1, X_2, \ldots$ as independent and identically distributed positive random variables. If $X_1$ has regularly varying tail of index $-\alpha$, where $\alpha>0$, and if $\{\Theta_t,t\geq1\}$ is a positive sequence of random variables independent of $\{X_t\}$, then it is known, which can also be obtained from the sufficient conditions above, that under some appropriate moment conditions on $\{\Theta_t,t\geq1\}$, $\xinf=\sum_{t=1}^{\infty}\Theta_t X_t$ converges with probability 1 and has regularly varying tail of index $-\alpha$. Motivated by the converse problems in \cite{JMRS:2009} we ask the question that, if $\xinf$ has regularly varying tail, then does $X_1$ have regularly varying tail under some appropriate conditions? We obtain appropriate sufficient moment conditions, including nonvanishing Mellin transform of $\sum_{t=1}^\infty \Theta_t$ along some vertical line in the complex plane, so that the above is true. We also show that the condition on the Mellin transform cannot be dropped.
\end{abstract}
\maketitle

\begin{section}{Introduction}\label{section:intro}
Let $\{X_t, t \geq 1\}$ be a sequence of identically distributed, pairwise asymptotically independent, cf.~\eqref{asymptotic independence}, random variables and $\{ \Theta_t, t\geq 1 \}$ be a sequence of positive random variables independent of the sequence $\{X_t, t \geq 1\}$. We shall discuss the tail probabilities and almost sure convergence of $\xinf=\sum_{t=1}^{\infty}\Theta_t X_t^{+}$ (where $X^+=\max\{0,X\}$)  and $\max_{1\leq k<\infty}\sum_{t=1}^{k}\Theta_t X_t$, in particular, when $X_t$'s belong to the class of random variables with regularly varying tail and $\{\Theta_t, t\geq 1\}$ satisfies some moment conditions. We shall say that a random variable $X$ with distribution function $F$ has \textit{regularly varying tail of index $-\alpha$}, if $\overline F(x) := 1-F(x)$ is a regularly varying function of index $-\alpha$, that is, for any $t>0$, as $x\to\infty$, $\overline F(tx) \sim t^{-\alpha} \overline F(x)$. Here and later, for two positive functions $a(x)$ and $b(x)$, we write $a(x) \sim b(x)$ as $x\to\infty$, if $\lim_{x\to\infty} a(x)/b(x) = 1$. For $\alpha>0$, the convergence in the limit of the ratio of the tail probabilities is uniform in $t$ on the intervals of the form $[a,\infty)$ with $a>0$. Note that, we require the upper endpoint of the support of $X$ to be $\infty$. In recent times, there have been quite a few articles devoted to the asymptotic tail behavior of randomly weighted sums and their maxima,  \citep[see, for example,][]{resnick:willekens:1991, tang:chen:ng:2005, tang:wang:2006, zhang:weng:shen:2008, hult:somorodnitsky:2008}.

The question about the tail behavior of the infinite series $\xinf$ with non-random $\Theta_t$ and i.i.d.\ $X_t$ having regularly varying tails has been studied well in the literature, as it arises in the context of the linear processes, including ARMA and FARIMA processes. We refer to \cite{jessen:mikosch:2006} for a review of the results. The case, when $\Theta_t$'s are random, arises in various areas, especially in actuarial and economic situations and stochastic recurrence equation. For various applications, see \cite{hult:somorodnitsky:2008, zhang:weng:shen:2008}.

\cite{resnick:willekens:1991} showed that if $\{X_t\}$ is a sequence of i.i.d.\ nonnegative random variables with regularly varying tail of index $-\alpha$, where $\alpha>0$ and $\{\Theta_t\}$ is another sequence of positive random variables independent of $\{X_t\}$, the series $\xinf$ has regularly varying tail under the following conditions, which we shall call the RW conditions:
\renewcommand{\theenumi}{RW\arabic{enumi}}
\begin{enumerate}
\item \label{RW1} If $0 <\alpha<1$, then for some $\epsilon\in(0,\alpha)$, $\sum_{t=1}^{\infty} \E[\Theta_t^{\alpha+\epsilon} + \Theta_t^{\alpha-\epsilon}]<\infty$.
\item \label{RW2} If $1\leq\alpha<\infty$, then for some $\epsilon\in(0,\alpha)$, $\sum_{t=1}^{\infty} (\E[\Theta_t^{\alpha+\epsilon} + \Theta_t^{\alpha-\epsilon}])^{\frac{1}{\alpha+\epsilon}}<\infty$.
\end{enumerate}
In this case, we have $\prob[\xinf>x] \sim \sum_{t=1}^\infty \E[\Theta_t^\alpha] \prob[X_1>x]$ as $x\to\infty$.
\begin{remark} \label{rem: RW}
Each of the RW conditions implies the other for the respective ranges of $\alpha$. In particular, if $0<\alpha<1$, choose $\epsilon^\prime<\epsilon$ such that $\alpha+\epsilon^\prime<1$. Note that
\begin{multline*}
\sum_{t=1}^{\infty} \E[\Theta_t^{\alpha+\epsilon^\prime} + \Theta_t^{\alpha-\epsilon^\prime}] \le 2 \sum_{t=1}^{\infty} \E[\Theta_t^{\alpha+\epsilon^\prime} \mathbbm 1_{[\Theta_t\ge 1]} + \Theta_t^{\alpha-\epsilon^\prime} \mathbbm 1_{[\Theta_t< 1]}]\\
\le 2 \sum_{t=1}^{\infty} \E[\Theta_t^{\alpha+\epsilon} \mathbbm 1_{[\Theta_t\ge 1]} + \Theta_t^{\alpha-\epsilon} \mathbbm 1_{[\Theta_t< 1]}] \le 2\sum_{t=1}^{\infty} \E[\Theta_t^{\alpha+\epsilon} + \Theta_t^{\alpha-\epsilon}]<\infty.
\end{multline*}
Further, since $\alpha+\epsilon^\prime<1$, we also have $\sum_{t=1}^{\infty} (\E[\Theta_t^{\alpha+\epsilon^\prime} + \Theta_t^{\alpha-\epsilon^\prime}])^{\frac{1}{\alpha+\epsilon^\prime}}<\infty$. On the other hand, if $\alpha\ge 1$ and $\epsilon>0$, then $\alpha+\epsilon>1$ and the condition~\eqref{RW2} implies $\sum_{t=1}^{\infty} \E[\Theta_t^{\alpha+\epsilon} + \Theta_t^{\alpha-\epsilon}]<\infty$.
\end{remark}

\cite{zhang:weng:shen:2008} considered the tails of $\sum_{t=1}^n \Theta_tX_t$ and the tails of their maxima, when $\{X_t\}$ are pairwise asymptotically independent and have extended regularly varying and negligible left tail and $\{\Theta_t\}$ are positive random variables independent of $\{X_t\}$. The sufficient conditions for the tails to be regularly varying are almost similar.

While the tail behavior of $\xinf$ requires only the $\alpha$-th moments of $\Theta_t$'s, we require existence and summability of some extra moments in the RW conditions. Note that $\Theta_t^{\alpha+\epsilon}$ acts as a dominator for $[\Theta_t\ge1]$ and $\Theta_t^{\alpha-\epsilon}$ acts as a dominator for $[\Theta_t\le1]$. In some cases, the assumption of existence and summability of extra moments can become restrictive. For example, consider $\{\Theta_t\}$ such that $\sum_{t=1}^{\infty}\E[\Theta_t^{\alpha+\epsilon}]=\infty$ for all $\epsilon>0$ but $\sum_{t=1}^{\infty} \E[\Theta_t^{\alpha}]<\infty$. (A particular choice of such $\{\Theta_t\}$, for $\alpha<1$ is as follows: $\Theta_t$ takes values ${2^t}/{t^{2/\alpha}}$ and $0$ with probability ${2^{-t\alpha}}$ and $1-2^{-t\alpha}$ respectively.) Also let  $\{X_t\}$ be i.i.d.\ Pareto with parameter $\alpha<1$, independent of $\{\Theta_t\}$. Then it turns out, after some easy calculations, that $\sum_{t=1}^{\infty}\Theta_t X_t$ is finite almost surely and has regularly varying tail of index $-\alpha$. This leads to the question whether we can reduce the moment conditions on $\Theta_t$ to obtain the regular variation of the tail for $\xinf$.

The situation becomes clearer when we consider a general term of the series. It involves the product $\Theta_t X_t$. Using Breiman's theorem \citep[cf.][]{breiman:1965, cline:somorodnitsky:1994}, the tail behavior of the product depends on the moments of $\Theta_t$. Breiman's theorem states that, if $X$ is a random variable with regularly varying tail of index $-\alpha$ for some $\alpha> 0$ and is independent of a positive random variable $\Theta$ satisfying $\E[\Theta^{\alpha+\epsilon}]<\infty$ for some $\epsilon>0$, then,
\begin{equation}\label{breiman}
\lim_{x\to\infty} \prob[\Theta X > x]\sim \E[\Theta^{\alpha}] \prob[X>x].
\end{equation}
Note that, in this case, we work with a probability measure $\prob [\Theta_t \in \cdot]$, unlike in the problem of the weighted sum, where a $\sigma$-finite measure $\sum_{t=1}^\infty \prob [\Theta\in\cdot]$ is considered. In this case, we can consider the dominator as $1$ on $[\Theta\le1]$ instead of $\Theta^{\alpha-\epsilon}$, since $1$ is integrable with respect to a probability measure.

\cite{denisov:zwart:2007} relaxed the existence of $(\alpha+\epsilon)$ moments in Breiman's theorem to $\E[\Theta^{\alpha}] < \infty$. They also made the further natural assumption that $\prob[\Theta>x]=\lito(\prob[X>x])$. However, to obtain~\eqref{breiman}, the weaker moment assumption needed to be compensated. They obtained some sufficient conditions for~\eqref{breiman} to hold. We would like to find conditions similar to those obtained by \cite{denisov:zwart:2007}, which will guarantee the regular variation of $\xinf$.

In the above discussion, we considered the effect of the tail of $X_1$ in determining the tail of $\xinf$. However, the converse question is also equally interesting. More specifically, let $\{X_t\}$ be a sequence of identically distributed, asymptotically independent, positive random variables, independent of another sequence of positive random variables $\{\Theta_t\}$. As before, we define $\xinf=\sum_{t=1}^{\infty}\Theta_t X_t$ and assume that $\xinf$ converges with probability one and has regularly varying tail of index $-\alpha$ with $\alpha>0$. It is interesting to obtain sufficient conditions which will ensure that $X_1$ has a regularly varying tail of index $-\alpha$ as well.

Similar converse questions regarding Breiman's theorem~\eqref{breiman} have recently been considered in the literature. Suppose $X$ and $Y$ are positive random variables with $\E[Y^{\alpha+\epsilon}]<\infty$ and the product $XY$ has regularly varying tail of index $-\alpha$, $\alpha>0$. Then it was shown in \cite{JMRS:2009} that $X$ has regularly varying tail of same index and hence~\eqref{breiman} holds. They have also obtained results for the weighted series, when the weights $\{\Theta_t\}$ are nonrandom. We shall extend this result for product to the case of randomly weighted series under appropriate conditions.

In Section~\ref{section:notation} we first describe the various classes of heavy tailed distributions and describe the conditions imposed by \cite{denisov:zwart:2007}. We study the tail behavior when finite weighted sums are considered. In Section~\ref{sec:direct} we describe the tail behavior of the series of randomly weighted sums. In Section~\ref{sec: converse} we consider the converse problem described above. We prove the converse result is true under the RW conditions and the extra assumption of nonvanishing Mellin transform. We also show the necessity of this extra assumption.
\end{section}

\begin{section}{Notations and preliminary results} \label{section:notation}
We first introduce a few classes of random variables, which will be required for the rest of the discussion. A random variable $X$ with distribution function $F$ is called \textit{long tailed}, if for any fixed $y\in\mathbb{R}$ and as $x \rightarrow \infty $, we have $\overline{F}(x-y) \sim \overline{F}(x)$. The class of long tailed distribution is denoted by $\mathcal{L}$. Observe that for $F\in\mathcal L$, we need $\overline F(x)>0$ for all $x>0$. The class $\mathcal{L}$ is related to the class of distributions with regularly varying tail by the fact that $F\in \mathcal{L}$ if and only if $\overline{F}(\log(\cdot))$ is slowly varying, that is, regularly varying of index $0$. Equivalently, the random variable $X$ has distribution function in the class $\mathcal{L}$ if and only if $\exp(X)$ has a regularly varying tail of index $0$.

A nonnegative function $f$, which does not vanish for all large $x$, is in the class of \textit{subexponential densities} (denoted by $\mathcal{S}_{d}$), if it satisfies the property
$$\lim_{x\rightarrow\infty}\int_{0}^{x}\frac{f(x-y)}{f(x)}f(y) dy = 2\int_{0}^{\infty} f(u)du < \infty.$$
If, for some random variable $X$, the tail probability distribution function $\overline{F}(x) = \prob[ X > x]$ is in the class of subexponential densities, we say that $X$ is in $\mathcal{S^{*}}$. Again, if $F\in\mathcal S^*$, we need $\overline F(x)>0$ for all $x>0$.

A distribution function $F$ belongs to the class $\mathcal{S(\gamma)}$ with $\gamma\geq 0$ if, for all real $u$,
$$\lim_{x\to\infty} \frac{\overline F(x-u)}{\overline F(x)} = e^{\gamma u} \quad\text{and}\quad \lim_{x\rightarrow\infty} \frac{\overline{F*F}(x)}{\overline{F}(x)} = 2 \int^{\infty}_{0} e^{\gamma y} F(dy) < \infty.$$
The class $\mathcal{S} := \mathcal{S}$(0) is called the class of \textit{subexponential distribution  functions}. See \cite{kluppelberg:1988, kluppelberg:1989, embrechets:kluppelberg:mikosch:1997} for properties of these classes.

We call two random variables $X_1$ and $X_2$ to be \textit{asymptotically independent} if
\begin{equation} \label{asymptotic independence}
\lim_{x\rightarrow\infty}\frac{\prob[X_1 > x,X_2 > x]}{\prob[X_t > x]} = 0, \text{ for $t=1, 2$.}
\end{equation}
See \cite{ledford:tawn:1996, ledford:tawn:1997} or Chapter~6.5 of \cite{resnick:2007} for discussions on asymptotic independence. Note that, we require $\overline F_t(x)>0$ for all $x>0$ and $t=1,2$. Observe that if $X_1$ and $X_2$ are independent, then they are also asymptotically independent. Thus the results under pairwise asymptotic independence condition continue to hold in the independent setup.

A random variable $X$ is said to have \textit{negligible left tail with respect to the right one}, if
\begin{equation} \label{right tail}
\lim_{x\rightarrow\infty}\frac{\prob[X<-x]}{\prob[X > x]}=0.
\end{equation}
Note that we require $\prob[X>x]>0$ for all $x>0$.

The random variables with regularly varying tails will play a central role in this article. Note that, if $X$ has a regularly varying tail of index $-\alpha$, then $x^\alpha \prob[X>x]$ is a slowly varying function, that is, a regularly varying function with index $0$. By Karamata's representation, a slowly varying function $L$ can be one of the four types \citep[cf.][Lemma 2.1] {denisov:zwart:2007}, namely,
\begin{enumerate}
\item \label{L1}$L(x) = c(x)$,
\item \label{L2}$L(x) = c(x)/\prob[ V > \log x]$,	
\item \label{L3}$L(x) = c(x)\prob[ U > \log x]$,
\item \label{L4}$L(x) = c(x)\prob[ U > \log x]/\prob[ V > \log x]$.
\end{enumerate}
In the above representations, $c(x)$ is a function converging to $c\in(0,\infty)$, and $U$ and $V$ are two long-tailed random variables with hazard rates converging to $0$. We shall refer to a slowly varying function $L$ as of type 1, type 2, type 3 or type 4, according to the above representations.

\cite{denisov:zwart:2007} introduced the following sufficient conditions on the slowly varying part $L$ of the regularly varying tail of index $-\alpha$ of a random variable $X$ with distribution function $\overline F(x)=x^{-\alpha} L(x)$ for Breiman's theorem~\eqref{breiman} to hold:
\let\myenumi\theenumi
\renewcommand{\theenumi}{DZ\arabic{enumi}}
\begin{enumerate}
   \item \label{DZ1} Assume $\lim_{x\rightarrow\infty}\sup_{y\in[1,x]} {L(y)}/{L(x)}:=D_1 < \infty$.
   \item \label{DZ2} Assume $L$ is of type 3 or type 4 and  $L(e^{x}) \in \mathcal{S}_{d}$.
   \item \label{DZ3} Assume $L$ is of type 3 or type 4, $U\in \mathcal{S^{*}}$ and $\prob[\Theta > x] = \lito(x^{-\alpha}\prob[U>\log x])$.
   \item \label{DZ4} When $\E[U]=\infty$ or equivalently $\E[X^\alpha]=\infty$, define $m(x)=\int_{0}^{x} v^{\alpha} F(dv) \to \infty$. Assume $\limsup_{x\rightarrow\infty} \sup_{\sqrt{x}\leq y \leq x} {L(y)}/{L(x)} :=D_2 < \infty$ and $\prob[\Theta > x] = \lito({\prob[X>x]}/m(x))$.
\end{enumerate}
\renewcommand{\theenumi}{\myenumi}
We shall refer to these conditions as the DZ conditions. For further discussions on the DZ conditions, we refer to \cite{denisov:zwart:2007}. Denisov and Zwart proved the following lemma:
\begin{lemma}[\citealp{denisov:zwart:2007}, Section~2] \label{breiman new}
Let $X$ be a nonnegative random variable with regularly varying tail of index $-\alpha$, $\alpha \geq 0$ and $\Theta$ be a positive random variable independent of $X$ such that $\E[\Theta^{\alpha}]<\infty$ and $\prob[\Theta > x]= \lito(\prob[X>x])$. If $X$ and ${\Theta}$ satisfy any one of the DZ conditions, then~\eqref{breiman} holds.
\end{lemma}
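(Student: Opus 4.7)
The plan is to condition on $\Theta$, writing
\begin{equation*}
\prob[\Theta X > x] = \int_0^\infty \overline F(x/\theta)\, dG(\theta) = I_1 + I_2(M) + I_3(M) + I_4,
\end{equation*}
where $G$ is the law of $\Theta$ and the four pieces correspond to $\theta \in (0,1]$, $(1,M]$, $(M,x]$, and $(x,\infty)$, with $M>1$ a truncation level to be sent to infinity at the end. Three of the four pieces are handled uniformly across the DZ conditions. On $(0,1]$ monotonicity of $\overline F$ gives $\overline F(x/\theta)\le\overline F(x)$, so dominated convergence against the pointwise limit $\theta^\alpha$ yields $I_1/\overline F(x)\to\int_0^1\theta^\alpha\,dG(\theta)$. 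On $(1,M]$ the uniform convergence of $\overline F(xt)/\overline F(x)\to t^{-\alpha}$ on $[1/M,1]$ (here one uses $\alpha>0$; the $\alpha=0$ case is handled directly since $L$ is slowly varying) gives $I_2(M)/\overline F(x)\to\int_1^M\theta^\alpha\,dG(\theta)$. On $(x,\infty)$ the trivial bound $\overline F(x/\theta)\le 1$ and $\prob[\Theta>x]=\lito(\overline F(x))$ give $I_4=\lito(\overline F(x))$. Since $\E[\Theta^\alpha\mathbbm 1_{[\Theta>M]}]\to 0$ by $\E[\Theta^\alpha]<\infty$, the lemma reduces to
\begin{equation*}
\limsup_{M\to\infty}\,\limsup_{x\to\infty}\,\frac{I_3(M)}{\overline F(x)}=0.
\end{equation*}

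The four DZ conditions are precisely four alternative mechanisms for this remaining estimate. Writing $\overline F(y)=y^{-\alpha}L(y)$, the integrand of $I_3(M)/\overline F(x)$ becomes $\theta^\alpha L(x/\theta)/L(x)$, and for $\theta\in(M,x]$ the argument $y=x/\theta$ ranges over $[1,x/M]\subset[1,x]$. Under \eqref{DZ1} the bound $L(y)/L(x)\le D_1$ on $[1,x]$ yields the $G$-integrable dominator $D_1\theta^\alpha$, and dominated convergence together with $\E[\Theta^\alpha\mathbbm 1_{[\Theta>M]}]\to 0$ closes the argument. Under \eqref{DZ4} one splits $I_3(M)$ further at $\sqrt x$: on $(M,\sqrt x]$ the argument $y$ lies in $[\sqrt x,x/M]$, where $L(y)/L(x)\le D_2$ reduces matters to the DZ1-type estimate; on $(\sqrt x,x]$ one integrates by parts against $\overline G$ and uses the strengthened assumption $\prob[\Theta>x]=\lito(\overline F(x)/m(x))$, with the truncated moment $m(x)=\int_0^x v^\alpha F(dv)$ providing the cancellation against the Potter factor. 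Under \eqref{DZ2}, the type 3 or type 4 representation of $L$ together with $L(e^\cdot)\in\mathcal S_d$ turns the estimate, after the logarithmic change of variables $\theta=e^s$ and $x=e^t$, into a convolution-of-densities bound of exactly the form handled by the defining property of $\mathcal S_d$. Under \eqref{DZ3}, the subexponentiality of $U$ and the sharper hypothesis $\prob[\Theta>x]=\lito(x^{-\alpha}\prob[U>\log x])$ allow a direct domination of the contribution on $(M,x]$ by $\overline F(x)$.

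The main obstacle is the \eqref{DZ4} case, where $\E[X^\alpha]=\infty$ makes the naive Potter bound leave an unintegrable remainder on $(\sqrt x,x]$; the proof requires the precise balance between $\prob[\Theta>x]$ and $\overline F(x)/m(x)$ to close via integration by parts, and tracking this balance is the technically delicate point. The other three cases are conceptually cleaner: \eqref{DZ1} is handled by uniform boundedness of the Potter factor on the full range, while \eqref{DZ2} and \eqref{DZ3} exploit the long-tailed or subexponential structure baked into $L$ itself, which provides an automatic absorbing mechanism for the middle range without further trickery.
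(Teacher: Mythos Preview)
The paper does not actually prove this lemma: it is quoted from \cite{denisov:zwart:2007} and stated without proof, so there is no ``paper's own proof'' to compare your attempt against. Your outline is a faithful sketch of the Denisov--Zwart argument; in fact the estimates you describe for the integral over $(1,x]$ under each of the four DZ conditions reappear almost verbatim later in the paper, in the proof of Theorem~3.1 (see the derivations of \eqref{eq: second bd DZ1}--\eqref{eq: second bd DZ4}), where they are used not for a single $\Theta$ but summed over the sequence $\{\Theta_t\}$.
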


The next result shows that asymptotic independence is preserved under multiplication, when the DZ conditions are assumed.
\begin{lemma} \label{joint asymptotic independence}
Let $X_1, X_2$ be two positive, asymptotically independent, identically distributed random variables with common regularly varying tail of index $-\alpha$, where $\alpha>0$. Let $\Theta_1$ and $\Theta_2$ be two other positive random variables independent of the pair $(X_1, X_2)$ satisfying $\E[\Theta_t^{\alpha}] < \infty$,  $t=1,2$. Also suppose that $\prob[\Theta_t > x]= \lito(\prob[X_1>x])$ for $t=1,2$ and the pairs $(\Theta_1, X_1)$ and $(\Theta_2, X_2)$ satisfy any one of the DZ conditions. Then $\Theta_1 X_1$ and $\Theta_2 X_2$ are asymptotically independent.
\end{lemma}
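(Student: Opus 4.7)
The plan is to dominate the joint tail by the tail of a product of two \emph{independent} random variables. Set $\Theta^{*} = \max(\Theta_1, \Theta_2)$ and $Y = \min(X_1, X_2)$. Since $X_t > 0$,
\[
\{\Theta_1 X_1 > x,\ \Theta_2 X_2 > x\} \subseteq \{\Theta^{*} X_1 > x,\ \Theta^{*} X_2 > x\} = \{\Theta^{*} Y > x\},
\]
and $\Theta^{*}$ and $Y$ are independent, being measurable functions of the two independent vectors $(\Theta_1,\Theta_2)$ and $(X_1,X_2)$. If we can establish $\prob[\Theta^{*} Y > x] = \lito(\prob[X_1 > x])$, then applying Lemma~\ref{breiman new} to each original pair $(\Theta_t, X_t)$ gives $\prob[\Theta_t X_t > x] \sim \E[\Theta_t^{\alpha}]\,\prob[X_1 > x]$ in the denominator of the asymptotic independence ratio, and the conclusion follows.

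I would first verify that the new pair $(\Theta^{*}, X_1)$ itself satisfies the hypotheses of Lemma~\ref{breiman new}: one has $\E[(\Theta^{*})^{\alpha}] \le \E[\Theta_1^{\alpha}] + \E[\Theta_2^{\alpha}] < \infty$ and $\prob[\Theta^{*} > x] \le \prob[\Theta_1 > x] + \prob[\Theta_2 > x] = \lito(\prob[X_1 > x])$. Whichever DZ condition is in force transfers as well, since DZ1 and DZ2 are restrictions only on the slowly varying function $L$ of $\overline F_{X_1}$, while the tail bounds on $\Theta$ appearing in DZ3 and DZ4 survive a union bound over $t=1,2$. Lemma~\ref{breiman new} then delivers
\[
\prob[\Theta^{*} X > x] \sim \E[(\Theta^{*})^{\alpha}]\, \prob[X_1 > x],
\]
where $X$ is an independent copy of $X_1$.

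To estimate $\prob[\Theta^{*} Y > x]$, note that asymptotic independence of $(X_1, X_2)$ yields $\phi(y) := \overline F_Y(y)/\overline F_{X_1}(y) \to 0$, so $\phi_K := \sup_{y \ge K} \phi(y) \downarrow 0$ as $K \to \infty$. For fixed large $K$, I would split the integral at $\theta = x/K$:
\[
\prob[\Theta^{*} Y > x] = \int_{0}^{x/K} \overline F_Y(x/\theta)\, dF_{\Theta^{*}}(\theta) + \int_{x/K}^{\infty} \overline F_Y(x/\theta)\, dF_{\Theta^{*}}(\theta).
\]
On the first piece $x/\theta \ge K$, so $\overline F_Y(x/\theta) \le \phi_K \overline F_{X_1}(x/\theta)$, bounding it by $\phi_K \prob[\Theta^{*} X > x] \sim \phi_K \E[(\Theta^{*})^{\alpha}] \prob[X_1 > x]$. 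The second piece is at most $\prob[\Theta^{*} > x/K] = \lito(\prob[X_1 > x/K]) = \lito(K^{\alpha} \prob[X_1 > x])$ by regular variation and the $\lito$ hypothesis on $\Theta^{*}$. Dividing by $\prob[X_1 > x]$, taking $\limsup_{x \to \infty}$ to obtain a bound $\phi_K \E[(\Theta^{*})^{\alpha}]$, and finally letting $K \to \infty$ gives the desired $\prob[\Theta^{*} Y > x] = \lito(\prob[X_1 > x])$.

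The main technical point is the book-keeping verifying that whichever DZ condition is assumed for the original pairs truly transfers to $(\Theta^{*}, X_1)$, so that the Breiman asymptotic is available for the dominating product; once this is in hand, the truncation at $x/K$ with $K$ sent to infinity is a routine device that exploits regular variation twice---once through Breiman on the bulk of the integral and once in the tail decay of $\Theta^{*}$ on the far part.
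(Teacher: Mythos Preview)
Your proof is correct but takes a different route from the paper. The paper bounds the joint tail by
\[
\int_0^\infty \frac{\prob[X_1>x/v,\,X_2>x/v]}{\prob[X_1>x/v]}\cdot\frac{\prob[X_1>x/v]}{\prob[X_1>x]}\,(G_1+G_2)(dv)
\]
and then applies Pratt's lemma: the integrand tends to $0$ pointwise, is dominated by the second factor, and the integral of that factor converges to $\int v^\alpha (G_1+G_2)(dv)$ by Lemma~\ref{breiman new} applied to the \emph{original} pairs $(\Theta_t,X_1)$. Your approach instead packages the weights into $\Theta^*=\Theta_1\vee\Theta_2$, checks that the DZ hypothesis transfers to $\Theta^*$, and replaces Pratt's lemma by a truncation at $x/K$ with $K\to\infty$. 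This is more elementary in that it avoids Pratt, at the price of the extra bookkeeping for $\Theta^*$; the paper sidesteps that step because it never leaves the marginals $G_1,G_2$, so Lemma~\ref{breiman new} is invoked only for the given $\Theta_t$. Incidentally, since $\theta\mapsto\overline F_{X_1}(x/\theta)$ is nondecreasing, you have $\E[\overline F_{X_1}(x/\Theta^*)]\le \E[\overline F_{X_1}(x/\Theta_1)]+\E[\overline F_{X_1}(x/\Theta_2)]$, so your bulk term could be bounded by $\phi_K\bigl(\prob[\Theta_1X_1>x]+\prob[\Theta_2X_1>x]\bigr)$ directly, letting you drop the DZ verification for $\Theta^*$ as well.
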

\begin{proof}
Here and later $G$ will denote the joint distribution function of $(\Theta_1,\Theta_2)$ and $G_t$ will denote the marginal distribution functions of $\Theta_t$.
\begin{multline*}
\frac{\prob[\Theta_1 X_1 > x,\Theta_2X_2 > x]}{\prob[X_1>x]} = \iint_{u\leq v}+\iint_{u>v} \frac{\prob[X_1>x/u,X_2>x/v]}{\prob[X_1>x]} G(du,dv)\\
\leq \int_0^\infty \frac{\prob [X_1>x/v, X_2>x/v]}{\prob[X_1>x/v]} \frac{\prob[X_1>x/v]}{\prob[X_1>x]} (G_1+G_2)(dv).
\end{multline*}
The integrand converges to $0$. Also, the first factor of the integrand is bounded by $1$ and hence the integrand is bounded by the second factor, which converges to $v^\alpha$. Further, using Lemma~\ref{breiman new}, we have
\begin{multline*}
\int_0^\infty \frac{\prob[X_1>x/v]}{\prob[X_1>x]} (G_1+G_2)(dv) = \frac{\prob[\Theta_1X_1>x] + \prob[\Theta_2X_1>x]}{\prob[X_1>x]}\\
\to \E\left[ \Theta_1^\alpha \right] + \E\left[ \Theta_2^\alpha \right] = \int_0^\infty v^\alpha (G_1+G_2)(dv).
\end{multline*}
Then the result follows using Pratt's lemma, cf. \cite{pratt:1960}.
\end{proof}

The next lemma shows that if the left tail of $X$ is negligible when compared to the right tail then the product has also such a behavior.
\begin{lemma} \label{left tail negligibility}
Let $X$ have regularly varying tail of index $-\alpha$, for some $\alpha>0$ satisfying~\eqref{right tail} and $\Theta$ be independent of $X$ satisfying $\E[\Theta^{\alpha}]<\infty$ and $\prob[\Theta>x]= \lito(\prob[X > x])$. Also suppose that $(\Theta,X)$ satisfy one of the DZ conditions. Then, for any $u>0$,
$$\lim_{x\rightarrow\infty}\frac{\prob[\Theta X < -ux]}{\prob[\Theta X > x]}=0.$$
\end{lemma}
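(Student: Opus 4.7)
The approach is to bound the left tail of $\Theta X$ by the right tail of $\Theta X^{+}$ (using the negligibility~\eqref{right tail}) plus a remainder involving the tail of $\Theta$, and then to evaluate both quantities via Lemma~\ref{breiman new}.

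First I would observe that, since $\Theta>0$, for every $y>0$ one has $\prob[\Theta X>y]=\prob[\Theta X^{+}>y]$, and the pair $(\Theta,X^{+})$ inherits the applicable DZ condition from $(\Theta,X)$ because every DZ condition depends only on the right-tail slowly varying function $L$ of $X$ and on the tail of $\Theta$. Applying Lemma~\ref{breiman new} to $(\Theta,X^{+})$ therefore yields $\prob[\Theta X>ux]\sim u^{-\alpha}\E[\Theta^{\alpha}]\,\overline F(x)$ as $x\to\infty$ (using regular variation of index $-\alpha$).

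Next, write $\overline F_{-}(y):=\prob[X<-y]$ and let $G$ denote the distribution function of $\Theta$. By independence,
\begin{equation*}
\prob[\Theta X<-ux]=\int_{0}^{\infty}\overline F_{-}(ux/v)\,G(dv).
\end{equation*}
Fix $\epsilon>0$. By~\eqref{right tail}, there exists $M>0$ such that $\overline F_{-}(y)\le\epsilon\overline F(y)$ for all $y\ge M$. Split the above integral at $v=ux/M$: on $(0,ux/M]$ we have $ux/v\ge M$, so $\overline F_{-}(ux/v)\le\epsilon\overline F(ux/v)$, while on $(ux/M,\infty)$ we use the trivial bound $\overline F_{-}\le 1$. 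This produces
\begin{equation*}
\prob[\Theta X<-ux]\le\epsilon\int_{0}^{\infty}\overline F(ux/v)\,G(dv)+\prob[\Theta>ux/M]=\epsilon\,\prob[\Theta X^{+}>ux]+\prob[\Theta>ux/M].
\end{equation*}

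Finally, by the hypothesis $\prob[\Theta>y]=\lito(\prob[X>y])$ combined with the regular variation of $\overline F$, the second term equals $\lito(\overline F(x))$, whereas the first term is asymptotically $\epsilon u^{-\alpha}\E[\Theta^{\alpha}]\overline F(x)$ by the first step. Dividing through by $\prob[\Theta X>x]\sim\E[\Theta^{\alpha}]\overline F(x)$ and taking $\limsup$ gives
\begin{equation*}
\limsup_{x\to\infty}\frac{\prob[\Theta X<-ux]}{\prob[\Theta X>x]}\le\epsilon u^{-\alpha}.
\end{equation*}
Since $\epsilon>0$ is arbitrary, the limit is $0$. The work is primarily bookkeeping: the only nontrivial point is verifying that the DZ condition satisfied by $(\Theta,X)$ is inherited by $(\Theta,X^{+})$, which is immediate because the DZ conditions are formulated purely in terms of the right tail of $X$ and the tail of $\Theta$.
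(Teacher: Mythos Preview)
Your argument is correct. The paper, however, does not use an $\epsilon$-splitting; it instead mimics the proof of Lemma~\ref{joint asymptotic independence} verbatim: write
\[
\frac{\prob[\Theta X<-ux]}{\prob[X>x]}
=\int_0^\infty \frac{\prob[X<-ux/v]}{\prob[X>ux/v]}\cdot\frac{\prob[X>ux/v]}{\prob[X>x]}\,G(dv),
\]
observe that the first factor tends to $0$ and is bounded by~\eqref{right tail}, note that the integral of the second factor converges to $\int u^{-\alpha}v^\alpha\,G(dv)$ by Lemma~\ref{breiman new}, and conclude via Pratt's lemma. Your route replaces Pratt's lemma by an explicit truncation at $v=ux/M$ together with the tail hypothesis $\prob[\Theta>x]=\lito(\prob[X>x])$ to handle the remainder; this is more elementary and self-contained, at the cost of a slightly longer write-up. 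The paper's route is shorter because the Pratt-lemma machinery has already been set up in the preceding lemma, so the proof reduces to a one-line remark.
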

The proof is exactly similar to that of Lemma~\ref{joint asymptotic independence}, except for the fact that the first factor in the integrand is bounded, as, using~\eqref{right tail}, $\prob[X<-x]/\prob[X>x]$ is bounded. We skip the proof.

The following result from \cite{davis:resnick:1996} considers a simple case of tail of sum of finitely many random variables.
\begin{lemma}[\citealp{davis:resnick:1996}, Lemma~2.1] \label{finite sum asymptotics1}
Suppose $Y_1, Y_2,\ldots,Y_k$ are nonnegative, pairwise asymptotically independent $($but not necessarily identically distributed$)$ random variables with regularly varying tails of common index $-\alpha$, where $\alpha>0$. If, for $t=1, 2, \ldots, k$,
${\prob[Y_t>x]}/\prob[Y_1>x]\rightarrow c_t$, then
$$\frac{\prob[\sum_{t=1}^k Y_t>x]}{\prob[Y_1>x]}\rightarrow c_1+c_2+\cdots+c_k.$$
\end{lemma}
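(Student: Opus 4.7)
The plan is to sandwich $\prob[\sum_{t=1}^k Y_t>x]/\prob[Y_1>x]$ between matching asymptotic upper and lower bounds, both equal to $c_1+c_2+\cdots+c_k$.

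For the lower bound, since the $Y_t$ are nonnegative, $\sum_t Y_t\ge \max_t Y_t$, so Bonferroni's inequality gives
\[
\prob\left[\sum_{t=1}^k Y_t>x\right]\ \ge\ \prob\left[\bigcup_{t=1}^k\{Y_t>x\}\right]\ \ge\ \sum_{t=1}^k \prob[Y_t>x] \;-\; \sum_{1\le i<j\le k} \prob[Y_i>x,\,Y_j>x].
\]
Dividing by $\prob[Y_1>x]$, the single-index sum converges to $\sum_{t=1}^k c_t$ by hypothesis, while each joint probability $\prob[Y_i>x,\,Y_j>x]$ is $\lito(\prob[Y_1>x])$ by pairwise asymptotic independence~\eqref{asymptotic independence} combined with $\prob[Y_t>x]\sim c_t\prob[Y_1>x]$.

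For the upper bound, fix $\delta\in(0,(k-1)/k)$. On the event $\{\sum_t Y_t>x\}\cap\{\max_t Y_t\le(1-\delta)x\}$, the sum of the $k-1$ non-maximal terms exceeds $\delta x$, so at least one of them exceeds $\delta x/(k-1)$; the maximum itself always satisfies $\max_t Y_t\ge x/k>\delta x/(k-1)$ on $\{\sum_t Y_t>x\}$ by the choice of $\delta$. Hence
\[
\left\{\sum_{t=1}^k Y_t>x\right\}\ \subseteq\ \bigcup_{t=1}^k\{Y_t>(1-\delta)x\}\ \cup\ \bigcup_{i\ne j}\left\{Y_i>\tfrac{\delta x}{k-1},\ Y_j>\tfrac{\delta x}{k-1}\right\}.
\]
Dividing by $\prob[Y_1>x]$, the single-index union is at most $(1-\delta)^{-\alpha}\sum_{t=1}^k c_t+\lito(1)$ by regular variation of $\prob[Y_1>\cdot]$ and the tail ratio hypothesis. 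Each joint term, dominated by $\prob[Y_i>cx,\,Y_j>cx]$ with $c=\delta/(k-1)$, is $\lito(\prob[Y_1>cx])$ by~\eqref{asymptotic independence}, hence $\lito(\prob[Y_1>x])$ by regular variation. Letting $\delta\to 0$ finishes the proof.

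The only subtle point is that the definition~\eqref{asymptotic independence} of pairwise asymptotic independence is stated with equal thresholds in both coordinates, so joint events with different thresholds $ax$ and $bx$ are controlled via monotonicity (passing to the common threshold $\min(a,b)x$) followed by regular variation; this is the mechanism that absorbs the pairwise error terms in both the lower and upper bounds.
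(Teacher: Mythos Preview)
The paper does not supply its own proof of this lemma; it is quoted verbatim from \cite{davis:resnick:1996}. Your sandwich argument is correct, and in fact your decomposition---Bonferroni for the lower bound, and splitting on $\{\max_t Y_t>(1-\delta)x\}$ for the upper bound---is precisely the mechanism the paper itself later isolates as Lemma~\ref{lem: comp} (stated there for $\Theta_tX_t$ in place of $Y_t$, with the same threshold $(1-\delta)x/(m-1)$ appearing in the pairwise error terms). So your approach matches the paper's own toolkit.

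One minor remark: your closing paragraph about controlling joint events with unequal thresholds via monotonicity is not actually needed in your own argument, since both in the lower bound and in the upper bound the joint events already carry a common threshold ($x$, respectively $\delta x/(k-1)$). The passage to $\prob[Y_1>x]$ in the denominator only uses regular variation of $\prob[Y_1>\cdot]$ and the ratio hypothesis, not any unequal-threshold version of~\eqref{asymptotic independence}.
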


We have the following corollary by applying Lemma~\ref{finite sum asymptotics1} with $Y_t=\Theta_tX_t^+$ and the modified Breiman's theorem in Lemma~\ref{breiman new} under the DZ conditions.
\begin{corollary} \label{fin sum cor}
Let $\{X_t\}$ be a sequence of pairwise asymptotically independent, identically distributed random variables with common regularly varying tail of index $-\alpha$, where $\alpha>0$, which is independent of another sequence of positive random variables $\{\Theta_t\}$ satisfying $\E[\Theta_t^\alpha]<\infty$, for all $t$. Also assume that, for all $t$, $\prob [\Theta_t > x] = \lito(\prob[X_1>x])$ and the pairs $(\Theta_t, X_t)$ satisfy one of the DZ conditions. Then we have
$$\prob\left[\sum_{t=1}^k\Theta_tX_t^+>x\right]\sim \prob[X_1>x]\sum_{t=1}^k\E[\Theta_t^{\alpha}].$$
\end{corollary}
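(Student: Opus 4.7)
My plan is to apply Lemma~\ref{finite sum asymptotics1} to the sequence $Y_t = \Theta_t X_t^+$, $t=1,\ldots,k$. This reduces the proof to checking three facts about the $Y_t$'s: nonnegativity, pairwise asymptotic independence with common regularly varying tail index, and the existence of the limiting tail ratios. Nonnegativity is immediate since $\Theta_t>0$ and $X_t^+\ge 0$.

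To establish the tail asymptotics of each $Y_t$, I would apply Lemma~\ref{breiman new} to the pair $(\Theta_t, X_t^+)$. The key observation is that $\prob[X_t^+>x] = \prob[X_t>x]$ for $x>0$, so $X_t^+$ has the same regularly varying right tail (and hence the same slowly varying function $L$) as $X_t$. Consequently, each DZ condition, which is a statement about $L$ (and in DZ4 about the truncated moment $m$, which involves only the positive part of the support), transfers verbatim from $(\Theta_t,X_t)$ to $(\Theta_t,X_t^+)$; likewise the smallness hypothesis $\prob[\Theta_t>x]=\lito(\prob[X_1>x])$ becomes $\lito(\prob[X_1^+>x])$. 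Hence Lemma~\ref{breiman new} gives
$$\prob[\Theta_t X_t^+ >x] \sim \E[\Theta_t^\alpha]\,\prob[X_1>x],$$
so all $Y_t$ have regularly varying tails of index $-\alpha$ with $\prob[Y_t>x]/\prob[Y_1>x] \to c_t := \E[\Theta_t^\alpha]/\E[\Theta_1^\alpha]$ (the case $\E[\Theta_1^\alpha]=0$ is trivial as then $Y_1\equiv 0$ and one reindexes).

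For pairwise asymptotic independence of $Y_s$ and $Y_t$ with $s\ne t$, I would apply Lemma~\ref{joint asymptotic independence} to the pairs $(\Theta_s,X_s^+)$ and $(\Theta_t,X_t^+)$. The pair $(X_s^+,X_t^+)$ inherits asymptotic independence from $(X_s,X_t)$, since $\prob[X_s^+>x,X_t^+>x]=\prob[X_s>x,X_t>x]$ and $\prob[X_t^+>x]=\prob[X_t>x]$ for $x>0$; the remaining hypotheses of Lemma~\ref{joint asymptotic independence} transfer as in the previous paragraph. Having verified all three requirements, Lemma~\ref{finite sum asymptotics1} yields
$$\frac{\prob\left[\sum_{t=1}^k \Theta_t X_t^+ > x\right]}{\prob[Y_1>x]} \to \sum_{t=1}^k c_t,$$
which upon multiplying through by $\prob[Y_1>x]\sim \E[\Theta_1^\alpha]\prob[X_1>x]$ gives the claimed equivalence. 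The only conceptual point to watch is that the DZ conditions and the Breiman-type moment/smallness assumptions depend solely on the right tail of $X$, so replacing $X_t$ with $X_t^+$ throughout is harmless; there is no genuine obstacle.
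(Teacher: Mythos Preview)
Your proposal is correct and matches the paper's approach exactly: the paper states only that the corollary follows ``by applying Lemma~\ref{finite sum asymptotics1} with $Y_t=\Theta_tX_t^+$ and the modified Breiman's theorem in Lemma~\ref{breiman new} under the DZ conditions.'' You have simply unpacked this, including the (implicit) appeal to Lemma~\ref{joint asymptotic independence} needed to verify the pairwise asymptotic independence hypothesis of Lemma~\ref{finite sum asymptotics1}; one small remark is that since each $\Theta_t$ is assumed positive, $\E[\Theta_t^\alpha]>0$ automatically, so the reindexing aside is unnecessary.
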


Using Lemmas~\ref{breiman new}--\ref{finite sum asymptotics1} and Corollary~\ref{fin sum cor} and arguing as in Theorem 3.1(a) of \cite{zhang:weng:shen:2008}, we have the following result. (Note that the proof of Theorem~3.1(a) of \cite{zhang:weng:shen:2008} require only the results obtained in Lemmas~\ref{breiman new}--\ref{finite sum asymptotics1} and Corollary~\ref{fin sum cor}.)
\begin{proposition} \label{finite sum}
Let $\{X_t\}$ be a sequence of pairwise asymptotically independent, identically distributed random variables with common regularly varying tail of index $-\alpha$, for some $\alpha>0$ satisfying~\eqref{right tail}, which is independent of another sequence of positive random variables $\{\Theta_t\}$. Further assume that, for all $t$, $\prob[\Theta_t>x]= \lito(\prob[X_1>x])$ and $\E[\Theta_t^{\alpha}]<\infty$. Also assume that the pairs $(\Theta_t, X_t)$ satisfy one of the DZ conditions. Then,
$$\prob\left[\max_{1\leq k \leq n}\sum_{t=1}^{k}\Theta_t X_t > x\right] \sim \prob\left[\sum_{t=1}^{n}\Theta_t X_t^{+}> x\right] \sim \prob[X_1>x] \sum_{t=1}^{n}\E[\Theta_t^{\alpha}].$$
\end{proposition}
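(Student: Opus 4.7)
The plan is to prove the two asymptotic equivalences separately. The rightmost one, $\prob[\sum_{t=1}^n \Theta_tX_t^+ > x] \sim \prob[X_1>x]\sum_{t=1}^n \E[\Theta_t^\alpha]$, is a direct application of Corollary~\ref{fin sum cor} to the variables $Y_t := \Theta_tX_t^+$: the sequence $\{X_t^+\}$ inherits pairwise asymptotic independence and the common regularly varying right tail from $\{X_t\}$, since the events $\{X_t^+>x\}$ coincide with $\{X_t>x\}$ for every $x>0$, so all the hypotheses of the corollary remain in force (including the DZ condition, which depends only on the right tail of $X_t$).

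For the middle equivalence, $\prob[\max_{1\le k\le n}\sum_{t=1}^k \Theta_tX_t > x] \sim \prob[\sum_{t=1}^n \Theta_tX_t^+>x]$, I would sandwich the maximum between matching upper and lower asymptotics. The upper bound is immediate from monotonicity: for every $k\le n$,
$$\sum_{t=1}^k \Theta_tX_t \le \sum_{t=1}^k \Theta_tX_t^+ \le \sum_{t=1}^n \Theta_tX_t^+,$$
so that $\max_{1\le k\le n}\sum_{t=1}^k \Theta_tX_t \le \sum_{t=1}^n \Theta_tX_t^+$, yielding $\limsup_{x\to\infty}\prob[\max_{1\le k\le n}\sum_{t=1}^k \Theta_tX_t > x]/\prob[X_1>x] \le \sum_{t=1}^n\E[\Theta_t^\alpha]$.

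For the matching lower bound, decompose $X_t^+ = X_t + X_t^-$, so that $\sum_{t=1}^n \Theta_tX_t^+ = \sum_{t=1}^n \Theta_tX_t + \sum_{t=1}^n \Theta_tX_t^-$. For any $\epsilon\in(0,1)$, a union bound gives
$$\prob\left[\sum_{t=1}^n \Theta_tX_t^+>x\right] \le \prob\left[\sum_{t=1}^n \Theta_tX_t > (1-\epsilon)x\right] + \sum_{t=1}^n \prob\left[\Theta_tX_t < -\tfrac{\epsilon x}{n}\right].$$
Lemma~\ref{breiman new} provides $\prob[\Theta_tX_t>x]\sim \E[\Theta_t^\alpha]\prob[X_1>x]$, and Lemma~\ref{left tail negligibility} then yields $\prob[\Theta_tX_t < -\epsilon x/n] = \lito(\prob[X_1>x])$ for each $t$. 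Since $\sum_{t=1}^n \Theta_tX_t$ is itself a partial sum, it is dominated by $\max_{1\le k\le n}\sum_{t=1}^k \Theta_tX_t$; dividing the displayed inequality by $\prob[X_1>x]$, taking $\liminf_{x\to\infty}$, and using regular variation of index $-\alpha$ to pull out a factor $(1-\epsilon)^{-\alpha}$ gives
$$\sum_{t=1}^n\E[\Theta_t^\alpha] \le (1-\epsilon)^{-\alpha}\liminf_{y\to\infty}\frac{\prob[\max_{1\le k\le n}\sum_{t=1}^k \Theta_tX_t > y]}{\prob[X_1>y]}.$$
Sending $\epsilon\downarrow 0$ closes the sandwich.

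The main obstacle I anticipate is just the bookkeeping in this lower-bound step: one must confirm that the DZ hypotheses on each pair $(\Theta_t,X_t)$ are enough to invoke both Lemma~\ref{breiman new} (for the right tail of $\Theta_tX_t$) and Lemma~\ref{left tail negligibility} (for the left tail), so that the left-tail contributions in the union bound are genuinely negligible compared to $\prob[X_1>x]$. Once that is settled, the rest reduces to monotonicity of partial sums and a standard regularly-varying rescaling argument.
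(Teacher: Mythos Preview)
Your proposal is correct and follows essentially the same route as the paper: the paper does not write out a proof but simply says the result follows from Lemmas~\ref{breiman new}--\ref{finite sum asymptotics1} and Corollary~\ref{fin sum cor} by arguing as in Theorem~3.1(a) of \cite{zhang:weng:shen:2008}, and your sandwich argument (Corollary~\ref{fin sum cor} for the right equivalence, the trivial upper bound $\max_k S_k\le\sum_t\Theta_tX_t^+$, and the lower bound via $X_t^+=X_t+X_t^-$ together with Lemma~\ref{left tail negligibility}) is exactly an explicit realization of that reference.
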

\end{section}

\begin{section}{The tail of the weighted sum under the DZ conditions} \label{sec:direct}
In Proposition~\ref{finite sum}, we saw that the conditions on the slowly varying function helps us to reduce the moment conditions on $\{\Theta_t\}$ for the finite sum. However we need some additional hypotheses to handle the infinite series. To study the almost sure convergence of $\xinf=\sum_{t=1}^{\infty} \Theta_t X_t^{+}$, observe that the partial sums $S_n=\sum_{t=1}^{n}\Theta_t X_t^{+}$ increase to $\xinf$. We shall show in the following results that $\prob[\xinf>x]\sim\prob[X_1>x]\sum_{t=1}^{\infty}\E[\Theta_t^{\alpha}]$ under suitable conditions. Thus if $\sum_{t=1}^{\infty}\E[\Theta_t^{\alpha}] < \infty$, then $\lim_{x\rightarrow\infty}\prob[\xinf>x]=0$ and $\xinf$ is finite almost surely.

To obtain the required tail behavior, we shall assume the following conditions, which weaken the moment requirements of $\{\Theta_t\}$ assumed in the conditions~\eqref{RW1} and~\eqref{RW2} given in \cite{resnick:willekens:1991}:
\renewcommand{\theenumi}{RW\arabic{enumi}$^\prime$}
\begin{enumerate}
    \item \label{ERW1} For $ 0 <\alpha<1$, $\sum_{t=1}^{\infty} \E[\Theta_t^{\alpha}]<\infty$.
    \item \label{ERW2} For $1\leq\alpha<\infty$, for some $\epsilon>0$, $\sum_{t=1}^{\infty} (\E[\Theta_t^{\alpha}])^{\frac{1}{\alpha+\epsilon}} <\infty$.
\end{enumerate}
\renewcommand{\theenumi}{\myenumi}
We shall call these conditions modified RW moment conditions.

\begin{remark} \label{almost sure remark}
As observed in Remark~\ref{rem: RW}, for $\alpha\geq 1$ and $\epsilon>0$, the finiteness of the sum in~\eqref{ERW2} implies $\sum_{t=1}^{\infty}(\E[\Theta_t^{\alpha}])<\infty$. Thus to check the almost sure finiteness of $\xinf$, it is enough to check the tail asymptotics condition:
$$\prob[\xinf > x]\sim \prob[X_1>x]\sum_{t=1}^{\infty}\E[\Theta_t^{\alpha}].$$

We shall prove it under the above model together with the assumption that $\prob[\Theta_t>x] = \lito(\prob[X_1>x])$ and one of the DZ conditions. We need to assume an extra summability condition for uniform convergence, when the conditions~\eqref{DZ2},~\eqref{DZ3} or~\eqref{DZ4} hold.

Further note that $\Theta_1 X_1 \le \max_{1\le n<\infty} \sum_{t=1}^n \Theta_t X_t \le X_{(\infty)}$ and hence the almost sure finiteness of $X_{(\infty)}$ guarantees that $\max_{1\le n<\infty} \sum_{t=1}^n \Theta_t X_t$ is a valid random variable.
\end{remark}

\begin{theorem}
Suppose that $\{X_t\}$ is a sequence of pairwise asymptotically independent, identically distributed random variables with common regularly varying tail of index $-\alpha$, where $\alpha>0$, satisfying~\eqref{right tail}, which is independent of another sequence of positive random variables $\{\Theta_t\}$. Also assume that $\prob[\Theta_t>x]=\lito(\prob[X_1>x])$, the pairs $(\Theta_t, X_t)$ satisfy one of the four DZ conditions~\eqref{DZ1}--\eqref{DZ4} and, depending on the value of $\alpha$, the modified RW moment conditions~\eqref{ERW1} or~\eqref{ERW2} holds. If the pairs $(\Theta_t, X_t)$ satisfy DZ condition~\eqref{DZ2},~\eqref{DZ3} or~\eqref{DZ4}, define
\begin{equation} \label{Ct}
C_t =
\begin{cases}
\sup_x \frac{\prob[\Theta_t > x]}{\prob[X_1>x]}, &\text{when~\eqref{DZ2} holds,}\\[1ex]
\sup_x \frac{\prob[\Theta_t > x]}{x^{-\alpha}\prob[U>\log x]}, &\text{when~\eqref{DZ3} holds,}\\[1ex]
\sup_x \frac{\prob[\Theta_t > x]}{\prob[X_1>x]} m(x), &\text{when~\eqref{DZ4} holds,}
\end{cases}
\end{equation}
and further assume that
\begin{align}
&\sum_{t=1}^\infty C_t < \infty, &\text{when $\alpha<1$,} \label{sum less}\\
&\sum_{t=1}^\infty C_t^{\frac1{\alpha+\epsilon}} < \infty, &\text{when $\alpha\ge 1$.} \label{sum more}
\end{align}

Then
$$\prob\left[\max_{1\leq n <\infty}\sum_{t=1}^{n}\Theta_t X_t > x\right]\sim \prob[\xinf > x]\sim \prob[X_1>x]\sum_{t=1}^{\infty}\E[\Theta_t^{\alpha}]$$
and $\xinf$ is almost surely finite.
\end{theorem}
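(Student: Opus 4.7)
The plan is a truncation argument that reduces the theorem to the finite-sum statement Proposition~\ref{finite sum}. Write $\xinf = S_m + R_m$, where $S_m = \sum_{t=1}^{m}\Theta_tX_t^+$ and $R_m = \sum_{t=m+1}^{\infty}\Theta_tX_t^+\geq 0$. Since $\xinf\geq S_m$ and Proposition~\ref{finite sum} applies to the positive parts $X_t^+$ (whose right tails coincide with those of $X_t$),
\[
\liminf_{x\to\infty}\frac{\prob[\xinf>x]}{\prob[X_1>x]}\geq\sum_{t=1}^{m}\E[\Theta_t^\alpha],
\]
and the lower bound follows on letting $m\to\infty$ (using $\sum_{t=1}^\infty\E[\Theta_t^\alpha]<\infty$, which is \eqref{ERW1} for $\alpha<1$ and a consequence of \eqref{ERW2} by Remark~\ref{rem: RW} for $\alpha\geq 1$). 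For the upper bound, for every $\delta\in(0,1)$,
\[
\prob[\xinf>x]\leq\prob[S_m>(1-\delta)x]+\prob[R_m>\delta x];
\]
Proposition~\ref{finite sum} together with regular variation of $\prob[X_1>\,\cdot\,]$ handles the first summand, so the entire argument reduces to the uniform remainder estimate
\begin{equation}\label{keyplan}
\lim_{m\to\infty}\limsup_{x\to\infty}\frac{\prob[R_m>\delta x]}{\prob[X_1>x]}=0,
\end{equation}
after which I would let $m\to\infty$ and then $\delta\to 0$.

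To establish \eqref{keyplan} I split at level $M=\delta x/2$,
\[
\prob[R_m>\delta x]\leq\sum_{t>m}\prob[\Theta_tX_t^+>M]+\prob\Bigl[\sum_{t>m}\Theta_tX_t^+\mathbbm{1}_{[\Theta_tX_t^+\leq M]}>M\Bigr].
\]
The key auxiliary input is a uniform-in-$(t,y)$ strengthening of Lemma~\ref{breiman new} of the form $\prob[\Theta_tX_t^+>y]\leq K(\E[\Theta_t^\alpha]+C_t)\prob[X_1>y]$ for all $y$ beyond a common threshold: under~\eqref{DZ1} this is a Potter-type consequence of $D_1<\infty$ (and $C_t$ does not appear), whereas under~\eqref{DZ2}--\eqref{DZ4} the coefficient $C_t$ of~\eqref{Ct} captures precisely the part of the tail of $\Theta_tX_t^+$ that $\E[\Theta_t^\alpha]\prob[X_1>y]$ alone does not control. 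Combined with regular variation, this bound makes the first sum $\lesssim\bigl(\sum_{t>m}\E[\Theta_t^\alpha]+\sum_{t>m}C_t\bigr)\prob[X_1>x]$, which vanishes as $m\to\infty$ by \eqref{ERW1}/\eqref{ERW2} and \eqref{sum less}/\eqref{sum more}.

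For the truncated sum I would estimate, via Karamata's theorem applied after conditioning on $\Theta_t$ and using the same uniform Breiman bound, that
\[
\E\bigl[(\Theta_tX_t^+)^\beta\mathbbm{1}_{[\Theta_tX_t^+\leq M]}\bigr]\leq K'(\E[\Theta_t^\alpha]+C_t)\,M^\beta\,\prob[X_1>M]\quad\text{for every }\beta>\alpha.
\]
When $0<\alpha<1$, picking $\beta\in(\alpha,1)$ and exploiting the subadditivity $(\sum a_t)^\beta\leq\sum a_t^\beta$ together with Markov's inequality at level $M$ bounds the truncated probability by $\lesssim\sum_{t>m}(\E[\Theta_t^\alpha]+C_t)\prob[X_1>x]$, which vanishes by \eqref{ERW1} and \eqref{sum less}. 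When $\alpha\geq 1$, taking $\beta=\alpha+\epsilon$ furnished by \eqref{ERW2} and applying Minkowski's inequality in $L^\beta$ yields instead a bound $\lesssim\bigl(\sum_{t>m}(\E[\Theta_t^\alpha])^{1/\beta}+\sum_{t>m}C_t^{1/\beta}\bigr)^\beta\prob[X_1>x]$, which vanishes by \eqref{ERW2} and \eqref{sum more}. The two cases together yield \eqref{keyplan}, and the almost-sure finiteness of $\xinf$ is then immediate from $\prob[\xinf>x]\to 0$.

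Finally, for the maximum statement the upper bound is immediate from $\max_{1\leq n<\infty}\sum_{t=1}^{n}\Theta_tX_t\leq\xinf$, while the lower bound uses $\max_{1\leq k<\infty}\sum_{t=1}^{k}\Theta_tX_t\geq\sum_{t=1}^{n}\Theta_tX_t$ for each fixed $n$ combined with Proposition~\ref{finite sum} (which already handles the signed case through Lemma~\ref{left tail negligibility}) and $n\to\infty$. The main obstacle throughout is the uniform-in-$t$ Breiman bound underlying \eqref{keyplan}: the four DZ conditions are designed precisely so that such a bound is available, \eqref{DZ1} providing it automatically via Potter bounds and \eqref{DZ2}--\eqref{DZ4} providing it in exchange for the auxiliary sequence $C_t$ and the extra summability hypotheses \eqref{sum less}/\eqref{sum more}.
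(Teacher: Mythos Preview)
Your proposal is correct and follows essentially the same route as the paper: truncate, handle the finite block via Proposition~\ref{finite sum}, and reduce to the remainder estimate~\eqref{keyplan} by splitting into a union bound on $\{\Theta_tX_t^+>M\}$ plus a Markov/Minkowski bound on the truncated sum, with Karamata controlling the truncated moments after conditioning on $\Theta_t$. Your ``uniform-in-$t$ Breiman bound'' $\prob[\Theta_tX_t>y]\le K(\E[\Theta_t^\alpha]+C_t)\prob[X_1>y]$ is exactly what the paper calls the bound $B_t$ in~\eqref{eq: dir bd}, and the bulk of the paper's proof is the detailed verification of this bound by splitting $\int_0^\infty \prob[X_1>x/v]/\prob[X_1>x]\,G_t(dv)$ over $(0,1]$, $(1,x]$, $(x,\infty)$ under each of the four DZ conditions---precisely the step you flag as the main obstacle without writing out. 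Two cosmetic differences: for $\alpha<1$ the paper simply takes $\beta=1$ and uses linearity of expectation rather than your $\beta\in(\alpha,1)$ with subadditivity, and for the upper bound on the maximum the paper repeats the split $\prob[\max_{n\le m}>(1-\delta)x]+\prob[R_m>\delta x]$ rather than invoking $\max\le\xinf$ as you do; both of your choices work and are arguably cleaner.
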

\begin{proof}
For any $m\geq1$, we have, by Proposition~\ref{finite sum},
$$\prob\left[\max_{1\leq n <\infty}\sum_{t=1}^{n}\Theta_t X_t >x\right] \geq \prob\left[\max_{1\leq n\leq m}\sum_{t=1}^{n}\Theta_t X_t > x\right] \sim \prob[X_1>x]\sum_{t=1}^{m}\E[\Theta_t^{\alpha}]$$
leading to
$$\liminf_{x\rightarrow\infty}\frac{\prob[\max_{1\leq n <\infty}\sum_{t=1}^{n}\Theta_t X_t >x]}{\prob[X_1>x]} \geq \sum_{t=1}^{\infty}\E[\Theta_t^{\alpha}].$$
Similarly, comparing with the partial sums and using Proposition~\ref{finite sum}, we also get
$$\liminf_{x\rightarrow\infty}\frac{\prob[\xinf >x]}{\prob[X_1>x]} \geq \sum_{t=1}^{\infty}\E[\Theta_t^{\alpha}].$$

For the other inequality, observe that for any natural number $m$, $0<\delta<1$ and $x\geq 0$,
\begin{multline*}
\prob\left[ \max_{1\leq n <\infty}\sum_{t=1}^{n}\Theta_tX_t > x\right]\\
\leq \prob\left[\max_{1\leq n \leq m}\sum_{t=1}^{n}\Theta_tX_t>(1-\delta)x\right] + \prob\left[\sum_{t=m+1}^{\infty}\Theta_tX_t^{+}>\delta x\right].
\end{multline*}
For the first term, by Proposition~\ref{finite sum} and the regular variation of the tail of $X_1$, we have,
\begin{multline*}
\lim_{x\rightarrow\infty} \frac{\prob\left[\max_{1\leq n \leq m}\sum_{t=1}^{n}\Theta_tX_t>(1-\delta)x\right]}{\prob[X_1>x]}\\ = (1-\delta)^{-\alpha} \sum_{t=1}^m \E[\Theta_t^{\alpha}] \leq (1-\delta)^{-\alpha} \sum_{t=1}^{\infty}\E[\Theta_t^{\alpha}].
\end{multline*}
Also, for $\xinf$, we have,
$$\prob\left[ \xinf > x\right] \leq\ \prob\left[\sum_{t=1}^{m}\Theta_tX_t^+>(1-\delta)x\right] + \prob\left[\sum_{t=m+1}^{\infty}\Theta_tX_t^{+}>\delta x\right]$$
and a similar result holds for the first term.

Then, as $X_1$ is a random variable with regularly varying tail, to complete the proof, it is enough to show that,
\begin{equation} \label{tail negligible}
\lim_{m\rightarrow\infty}\limsup_{x\rightarrow\infty}\frac{\prob[\sum_{t=m+1}^{\infty}\Theta_tX_t^{+} >x]}{\prob[X_1>x]}= 0.
\end{equation}
Now,
\begin{align}
&\prob\left[\sum_{t=m+1}^{\infty}\Theta_t{X_t}^{+} > x\right] \nonumber\\
\leq &\prob\left[\bigvee\limits_{t=m+1}^{\infty}\Theta_t X_t^{+}>x\right] +\prob\left[\sum_{t=m+1}^{\infty}\Theta_tX_t^{+}>x, \bigvee\limits_{t=m+1}^{\infty}\Theta_t X_t^{+}\leq x\right] \nonumber\\
\leq &\sum_{t=m+1}^\infty \prob[\Theta_t X_t > x] + \prob\left[\sum_{t=m+1}^{\infty}\Theta_tX_t^{+}\mathbbm{1}_{[\Theta_t X_t^+ \leq x]}>x\right]. \label{two terms tail negligible}
\end{align}

We bound the final term of~\eqref{two terms tail negligible} separately in the cases $\alpha<1$ and $\alpha\ge 1$. In the rest of the proof, for $\alpha\ge 1$, we shall choose $\epsilon>0$, so that the condition~\eqref{ERW2} holds. We first consider the case $\alpha<1$. By Markov inequality, the final term of~\eqref{two terms tail negligible} gets bounded above by
\begin{align}
\sum_{t=m+1}^{\infty}\frac1x\E\left[\Theta_t X_t^+\mathbbm{1}_{[\Theta_t X_t^+ \leq x]}\right]
&=\sum_{t=m+1}^{\infty}\int_0^{\infty}\frac1{x/v}{\E\left[X_t^+\mathbbm{1}_{[X_t^+\leq x/v]}\right]}G_t(dv) \label{eq: term 2 alpha small}\\
&=\sum_{t=m+1}^{\infty}\int_0^{\infty}\frac{\E[X_t^+\mathbbm{1}_{[X_t^+\leq x/v]}]}{x/v\prob[X_t^+>x/v]}\prob[X_t>x/v]G_t(dv).\nonumber
\end{align}
Now, using Karamata's theorem \citep[cf.][Theorem~2.1]{resnick:2007}, we have
$$\lim_{x\to\infty} \frac{\E\left[X_t^+\mathbbm{1}_{[X_t^+\leq x/v]}\right]}{x\prob[X_t^+>x]} = \frac\alpha{1-\alpha}$$
and, for $x<1$, we have $\E[X_t^+\mathbbm{1}_{[X_t^+\leq x/v]}]/(x\prob[X_t^+>x])\le 1/\prob[X_t^+>1]$. Thus, $\E[X_t^+\mathbbm{1}_{[X_t^+\leq x/v]}]/(x\prob[X_t^+>x])$ is bounded on $(0,\infty)$. So the final term of~\eqref{two terms tail negligible} becomes bounded by a multiple of $\sum_{t=m+1}^\infty \prob[\Theta_t X_t>x]$.

When $\alpha\geq 1$, using Markov inequality on the final term of~\eqref{two terms tail negligible}, we get a bound for it as
\begin{align}
&\frac1{x^{\alpha+\epsilon}} \E\left[ \left( \sum_{t=m+1}^\infty \Theta_t X_t^+ \mathbbm{1}_{[\Theta_t X_t^+\le x]} \right)^{\alpha+\epsilon} \right],\nonumber
\intertext{and then using Minkowski's inequality, this gets further bounded by}
&\left\{ \sum_{t=m+1}^\infty \left( \E \left[ \frac1{x^{\alpha+\epsilon}} \left(\Theta_t X_t^+\right)^{\alpha+\epsilon} \mathbbm{1}_{[\Theta_t X_t^+\le x]} \right] \right)^{\frac1{\alpha+\epsilon}} \right\}^{\alpha+\epsilon}\nonumber\\
= &\left\{ \sum_{t=m+1}^\infty \left[ \int_0^\infty (x/v)^{-(\alpha+\epsilon)} {\E \left[ \left(X_t^+\right)^{\alpha+\epsilon} \mathbbm{1}_{[X_t^+\le x/v]} \right]} G_t(dv) \right]^{\frac1{\alpha+\epsilon}} \right\}^{\alpha+\epsilon} \label{eq: term 2 alpha large}\\
= &\left\{ \sum_{t=m+1}^\infty \left[ \int_0^\infty \frac{\E \left[ \left(X_t^+\right)^{\alpha+\epsilon} \mathbbm{1}_{[X_t^+\le x/v]} \right]}{(x/v)^{\alpha+\epsilon} \prob[X_t^+ > x/v]} \prob[X_t>x/v] G_t(dv) \right]^{\frac1{\alpha+\epsilon}} \right\}^{\alpha+\epsilon}.\nonumber
\end{align}
Then, again using Karamata's theorem, the first factor of the integrand converges to $\alpha/\epsilon$ and, arguing as in the case $\alpha<1$, is bounded. Thus the final term of~\eqref{two terms tail negligible} is bounded by a multiple of $[ \sum_{t=m+1}^\infty ( \prob[\Theta_t X_t > x] )^{1/(\alpha+\epsilon)} ]^{\alpha+\epsilon}$.

Combining the two cases for $\alpha$, we get, for some $L_1>0$,
$$\frac{\prob[\sum_{t=m+1}^{\infty}\Theta_tX_t^{+} >x]}{\prob[X_1>x]} \le
\begin{cases}
L_1 \sum_{t=m+1}^\infty \frac{\prob[\Theta_t X_t>x]}{\prob[X_1>x]}, &\text{when $\alpha<1$,}\\[2ex]
\sum_{t=m+1}^\infty \frac{\prob[\Theta_t X_t>x]}{\prob[X_1>x]}\\
 + L_1 \left[ \sum_{t=m+1}^\infty \left( \frac{\prob[\Theta_t X_t > x]}{\prob[X_1>x]} \right)^{\frac1{\alpha+\epsilon}} \right]^{\alpha+\epsilon}, &\text{when $\alpha\ge 1$.}
\end{cases}$$
To prove~\eqref{tail negligible}, we shall show
\begin{equation} \label{eq: dir bd}
\frac{\prob[\Theta_t X_t > x]}{\prob[X_1>x]} \le B_t
\end{equation}
for all large values of $x$, where
\begin{equation} \label{eq: bd sum}
\begin{split}
\sum_{t=1}^\infty B_t <\infty, &\text{ for $\alpha<1$,}\\
\sum_{t=1}^\infty B_t^{\frac1{\alpha+\epsilon}} < \infty, &\text{ for $\alpha\ge1$.}
\end{split}
\end{equation}
As mentioned in Remark~\ref{almost sure remark}, for $\alpha\ge 1$ and $\epsilon>0$, $\sum_{t=1}^\infty B_t^{1/{\alpha+\epsilon}} < \infty$ will also imply $\sum_{t=1}^\infty B_t <\infty$. Thus, for both the cases of $\alpha<1$ and $\alpha\ge1$, the sums involved will be bounded by the tail sum of a convergent series and hence~\eqref{tail negligible} will hold.

First observe that
\begin{equation} \label{ratio}
\frac{\prob[\Theta_t X_t>x]}{\prob[X_1>x]}=\int_{0}^{\infty}\frac{\prob[X_1>x/v]}{\prob[X_1>x]}G_t(dv).
\end{equation}
We break the range of integration into three intervals $(0,1]$, $(1,x]$ and $(x,\infty)$, where we choose a suitably large $x$ greater than $1$.

Since $\overline F$ is regularly varying of index $-\alpha$ with $\alpha>0$, $\prob[X_1>x/v]/\prob[X_1>x]$ converges uniformly to $v^\alpha$ for $v\in(0,1)$ or equivalently $1/v\in(1, \infty)$. Hence the integral in~\eqref{ratio} over the first interval can be bounded, for all large enough $x$, as
\begin{equation} \label{eq: first bd}
\int_{0}^{1}\frac{\prob[X_1>x/v]}{\prob[X_1>x]}G_t(dv) \le 2\E[\Theta_t^\alpha].
\end{equation}

For the integral in~\eqref{ratio} over the third interval, we have, for all large enough $x$, by~\eqref{Ct} (for the conditions~\eqref{DZ2},~\eqref{DZ3} and~\eqref{DZ4} only),
\begin{multline} \label{eq: third bd}
\int_{x}^{\infty}\frac{\prob[X_1>x/v]}{\prob[X_1>x]}G_t(dv) \le \frac{\prob\left[\Theta_t>x\right]}{\prob[X_1>x]}\\
\le
\begin{cases}
\frac{\E\left[\Theta_t^\alpha\right]}{L(x)} \leq 2D_1 \frac{\E\left[\Theta_t^\alpha\right]}{L(1)}, &\text{by Markov's inequality, when~\eqref{DZ1} holds,}\\
C_t, &\text{when~\eqref{DZ2} holds,}\\
\frac{\prob\left[\Theta_t>x\right]}{c(x)x^{-\alpha}\prob[U>\log x]} \le \frac2c C_t, &\text{when~\eqref{DZ3} holds,}\\
C_t, &\text{as $m(x)\to\infty$, when~\eqref{DZ4} holds.}
\end{cases}
\end{multline}
Note that, when the condition~\eqref{DZ3} holds and $L$ is of type 4, we can ignore the factor $\prob[V>\log x]$, as it is bounded by $1$.

Finally, we consider the integral in~\eqref{ratio} over the second interval separately for each of the DZ conditions. We begin with the condition~\eqref{DZ1}. In this case, we have, for all large enough $x$,
\begin{multline} \label{eq: second bd DZ1}
\int_{1}^{x}\frac{\prob[X_1>x/v]}{\prob[X_1>x]}G_t(dv) \le \int_{1}^{x} v^\alpha \frac{L(x/v)}{L(x)} G_t(dv)\\ \le \sup_{y\in[1,x]} \frac{L(y)}{L(x)} \E\left[\Theta_t^\alpha\right] \le 2D_1 \E\left[\Theta_t^\alpha\right].
\end{multline}

Next we consider the condition~\eqref{DZ2}. Integrating by parts, we have
$$\int_{1}^{x}\frac{\prob\left[X_1>{x}/{v}\right]}{\prob[X_1>x]}G_t(dv) \leq \prob[\Theta_t > 1] + \int_{1}^{x} \frac{\prob[\Theta_t > v]}{\prob[X_1 > x]} d_{v}\prob\left[X_1 > {x}/{v}\right].$$ Using Markov's inequality and~\eqref{Ct} respectively in each of the terms, we have
$$\int_{1}^{x}\frac{\prob\left[X_1>{x}/{v}\right]}{\prob[X_1>x]}G_t(dv) \leq \E[\Theta_t^\alpha] + C_t \int_1^x \frac{\prob[X_1 > v]}{\prob[X_1 > x]} d_{v}\prob\left[X_1 > {x}/{v}\right].$$
Substituting $u=\log v$, the second term becomes, for all large $x$,
\begin{multline*}
C_t \int_0^{\log x} \frac{\prob[\log X_1 > u]}{\prob[\log X_1 > \log x]} d_u\prob\left[\log X_1 > \log x - u \right]\\
\le 2 C_t \E[\exp(\alpha (\log X_1)^+)] \le 2 C_t \E[X_1^\alpha],
\end{multline*}
where the inequalities follow, since $L(e^x)\in\mathcal S_d$ implies $(\log X_1)^+\in \mathcal S(\alpha)$, cf. \cite{kluppelberg:1989}. Thus,
\begin{equation} \label{eq: second bd DZ2}
\int_{1}^{x}\frac{\prob[X_1>x/v]}{\prob[X_1>x]}G_t(dv) \le \E[\Theta_t^\alpha] + 2 C_t \E[X_1^\alpha].
\end{equation}

Next we consider the condition~\eqref{DZ3}. In this case, we have
\begin{align*}
\int_1^{x}\frac{\prob[X_1>x/v]}{\prob[X_1>x]}G_t(dv)&= \int_1^{x}\frac{L(x/v)}{L(x)}v^{\alpha}G_t(dv)\\
&\leq \sup_{v\in[1,x]}\frac{c(x/v)}{c(x)} \int_1^{x}\frac{\prob[U > \log x -\log v]}{\prob[U > \log x]}v^{\alpha}G_t(dv).
\end{align*}
If $L$ is of type 4, the ratio $L(x/v)/L(x)$ has an extra factor $\prob[V>\log x]/\prob[V>\log x-\log v]$, which is bounded by $1$. Thus the above estimate works if $L$ is either of type 3 or of type 4. Since $c(x)\rightarrow c \in (0,\infty)$, we have $\sup_{v\in[N,x)} {c(x/v)}/{c(x)} := L_2 <\infty$. Integrating by parts, the integral becomes
\begin{multline*}
\int_1^{x}\frac{\prob[U>\log x-\log v]}{\prob[U>\log x]}v^{\alpha}G_t(dv)\\
\leq \prob[\Theta_t >1]+\int_1^{x}\frac{\prob[U>\log x- \log v]\prob[\Theta_t > v]}{\prob[U>\log x]}\alpha v^{\alpha -1}dv\\
+\int_1^{x}\frac{\prob[\Theta_t > v]v^{\alpha}}{\prob[U>\log x]}d_{v}\prob[U>\log x-\log v].
\end{multline*}
The first term is bounded by $\E[\Theta_1^\alpha]$ by Markov's inequality. By~\eqref{Ct}, the second term gets bounded by, for all large enough $x$,
$$\alpha C_t \int_1^x \frac{\prob[U>\log x-\log v]\prob[U>\log v]}{\prob[U>\log x]}d(\log v) \le 2 \alpha C_t \E[U],$$
as $U$ belongs to $\mathcal S^*$. Again, by~\eqref{Ct}, the third term gets bounded by, for all large enough $x$,
$$C_t \int_1^x \frac{\prob[U>\log v] d_v \prob[U>\log x - \log v]}{\prob[U>\log x]} \le 4 C_t,$$
as $U$ belongs to $\mathcal S^*$ and hence is subexponential, cf. \cite{kluppelberg:1988}. Combining the bounds for the three terms, we get
\begin{equation} \label{eq: second bd DZ3}
\int_{1}^{x}\frac{\prob[X_1>x/v]}{\prob[X_1>x]}G_t(dv) \le L_2 \{\E[\Theta_t^\alpha] + 2 (\alpha \E[U] + 2) C_t\}.
\end{equation}

Finally we consider the condition~\eqref{DZ4}. In this case, we split the interval $(1,x]$ into two subintervals $(1,\sqrt{x}]$ and $(\sqrt{x},x]$ and bound the integrals on each of the subintervals separately. We begin with the integral on the subinterval $(1,\sqrt{x}]$.
$$\int_1^{\sqrt x} \frac{L(x/v)}{L(x)}v^{\alpha}G_t(dv)\leq \sup_{v\in (1,\sqrt x]}\frac{L(x/v)}{L(x)}\int_1^{\sqrt x}v^{\alpha}G_t(dv) \leq D_2 \E[\Theta_t^{\alpha}].$$
For the integral over $(\sqrt{x}, x]$, we integrate by parts to obtain
$$\int_{\sqrt x}^{x}\frac{L(x/v)}{L(x)}v^{\alpha}G_t(dv)\leq \prob[\Theta_t>\sqrt x]x^{\alpha/2}\frac{L(\sqrt x)}{L(x)}+\int_{\sqrt x}^x\frac{\prob[\Theta_t>v]}{L(x)}d_v(v^{\alpha}L(x/v)).$$
By Markov's inequality, the first term is bounded by $D_2 \E[\Theta_t^\alpha]$. The second term becomes, using~\eqref{Ct},
\begin{align*}
\int_{\sqrt x}^x \frac{\prob[\Theta_t>v]}{L(x)} x^\alpha d_v(\prob[X_1\le x/v]) &\leq C_t\int_{\sqrt x}^x \frac{\prob[X_1>v]}{L(x)m(v)} x^\alpha d_v(\prob[X_1\le x/v])\\
&\leq \frac{C_t}{m(\sqrt x)}\int_{\sqrt x}^x\frac{L(v)}{L(x)}\left(\frac{x}{v}\right)^{\alpha}d_v(\prob[X_1\leq x/v])\\
&\leq \frac{D_2 C_t}{m(\sqrt x)} \int_{1}^{\sqrt x}y^{\alpha}d_y(\prob[X_1\leq y]) \le D_2 C_t.
\end{align*}
Combining the bounds for the integrals over each subinterval, we get
\begin{equation} \label{eq: second bd DZ4}
\int_{1}^{x}\frac{\prob[X_1>x/v]}{\prob[X_1>x]}G_t(dv) \le D_2 (2 \E[\Theta_t^\alpha] + C_t).
\end{equation}

Combining all the bounds in~\eqref{eq: first bd}--\eqref{eq: second bd DZ4}, for some constant $B$, we can choose the bound in~\eqref{eq: dir bd} as
$$B_t =
\begin{cases}
B \E[\Theta_t^\alpha], &\text{when the condition~\eqref{DZ1} holds,}\\
B (\E[\Theta_t^\alpha] + C_t), &\text{when the conditions~\eqref{DZ2}, \eqref{DZ3} or~\eqref{DZ4} hold.}
\end{cases}$$
Then, for $\alpha<1$, the summability condition~\eqref{eq: bd sum} follows from the condition~\eqref{ERW1} alone under the condition~\eqref{DZ1} and from the condition~\eqref{ERW1} together with~\eqref{sum less} under the conditions~\eqref{DZ2},~\eqref{DZ3} or~\eqref{DZ4}. For $\alpha\ge 1$, under the condition~\eqref{DZ1}, the summability condition~\eqref{eq: bd sum} follows from the condition~\eqref{ERW2}. Finally, to check the summability condition~\eqref{eq: bd sum} for $\alpha\ge1$, under the condition~\eqref{DZ2},~\eqref{DZ3} or~\eqref{DZ4}, observe that as $\alpha\ge 1$ and $\epsilon>0$, we have
$$(\E[\Theta_t^\alpha] + C_t)^{\frac1{\alpha+\epsilon}} \le (\E[\Theta_t^\alpha])^{\frac1{\alpha+\epsilon}} + C_t^{\frac1{\alpha+\epsilon}}$$ and we get the desired condition from the condition~\eqref{ERW2}, together with~\eqref{sum more}.
\end{proof}
\end{section}

\begin{section}{The tails of the summands from the tail of the sum} \label{sec: converse}
In this section, we address the converse problem of studying the tail behavior of $X_1$ based on the tail behavior of $\xinf$. For the converse problem, we restrict ourselves to the setup where the sequence $\{X_t\}$ is positive and pairwise asymptotically independent and the other sequence $\{\Theta_t\}$ is positive and independent of the sequence $\{X_t\}$, such that $\xinf$ is finite with probability one and has regularly varying tail of index $-\alpha$. Depending on the value of $\alpha$, we assume the usual RW moment conditions~\eqref{RW1} or~\eqref{RW2} for the sequence $\{\Theta_t\}$, instead of the modified ones. Then, under a further assumption of the non-vanishing Mellin transform along the vertical line of the complex plane with the real part $\alpha$, we shall show that $X_1$ also has regularly varying tail of index $-\alpha$.

We use the extension of the notion of product of two independent positive random variables to the product convolution of two measures on $(0,\infty)$, which we allow to be $\sigma$-finite. For two $\sigma$-finite measures $\nu$ and $\rho$ on $(0,\infty)$, we define the product convolution as
$$\nu \circledast \rho(B)= \int_0^{\infty}\nu(x^{-1}B)\rho(dx),$$
for any Borel subset $B$ of $(0,\infty)$. We shall need the following result from \cite{JMRS:2009}.
\begin{theorem}[\citealp{JMRS:2009}, Theorem~2.3] \label{JMRS}
Let a non-zero $\sigma$-finite measure $\rho$ on $(0,\infty)$ satisfies, for some $\alpha>0$, $\epsilon\in(0,\alpha)$ and all $\beta\in\mathbb R$,
\begin{align}
\int_{0}^{\infty} \left(y^{\alpha-\epsilon}\vee y^{\alpha+\epsilon}\right) \rho(dy)&<\infty \label{conditionjmrs0}
\intertext{and}
\int_{0}^{\infty}y^{\alpha+i\beta}\rho(dy)&\neq 0. \label{conditionjmrs1}
\end{align}

Suppose, for another $\sigma$-finite measure $\nu$ on $(0,\infty)$, the product convolution measure $\nu\circledast\rho$ has a regularly varying tail of index $-\alpha$ and
\begin{equation} \label{conditionjmrs} \lim_{b\rightarrow0}\limsup_{x\rightarrow\infty}\frac{\int_0^b\rho(x/y,\infty)\nu(dy)}{(\nu\circledast\rho)(x,\infty)}=0.
\end{equation}
Then the measure $\nu$ has a regularly varying tail of index $-\alpha$ as well and
$$\lim_{x\rightarrow\infty}\frac{\nu\circledast\rho (x,\infty)}{\nu(x,\infty)}=\int_0^{\infty}y^{\alpha}\rho(dy).$$

Conversely, if~\eqref{conditionjmrs0} holds but~\eqref{conditionjmrs1} fails for the measure $\rho$, then there exists a $\sigma$-finite measure $\nu$ without regularly varying tail, such that $\nu\circledast\rho$ has regularly varying tail of index $-\alpha$ and~\eqref{conditionjmrs} holds.
\end{theorem}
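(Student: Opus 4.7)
The plan is to use the logarithmic change of variable to convert the multiplicative structure into an ordinary additive convolution on $\mathbb{R}$, and then invoke a Wiener-type Tauberian theorem whose non-vanishing Fourier transform hypothesis is exactly condition~\eqref{conditionjmrs1}. First, push $\nu$, $\rho$, and $\mu:=\nu\circledast\rho$ forward by $y\mapsto\log y$, obtaining $\sigma$-finite measures $\tilde\nu$, $\tilde\rho$, $\tilde\mu$ on $\mathbb{R}$ with $\tilde\mu=\tilde\nu*\tilde\rho$. Regular variation of $\mu$ with index $-\alpha$ translates into $\tilde\mu(x,\infty)\sim e^{-\alpha x}L(e^x)$ for some slowly varying $L$. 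Exponentially tilt the kernel by defining $d\tilde\rho_\alpha(y)=e^{\alpha y}\,d\tilde\rho(y)$, which by~\eqref{conditionjmrs0} is a finite positive measure with Fourier transform $\widehat{\tilde\rho_\alpha}(\beta)=\int y^{\alpha+i\beta}\,\rho(dy)$ that never vanishes by~\eqref{conditionjmrs1}. Writing $f(x)=e^{\alpha x}\tilde\nu(x,\infty)$ and $g(x)=e^{\alpha x}\tilde\mu(x,\infty)$, the identity $\tilde\mu(x,\infty)=\int\tilde\nu(x-y,\infty)\,\tilde\rho(dy)$ rewrites as $g(x)=\int f(x-y)\,\tilde\rho_\alpha(dy)$, so the problem reduces to recovering the asymptotic behavior of $f$ from that of $g=f*\tilde\rho_\alpha$.

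Since $g$ is slowly varying in $e^x$ (that is, $g(x+h)/g(x)\to1$ for every fixed $h$) and $\widehat{\tilde\rho_\alpha}$ has no real zero, Wiener's Tauberian theorem permits inversion: translates of $\tilde\rho_\alpha$ span a dense subspace of $L^1(\mathbb R)$, so the asymptotic behavior of $g$ can be transferred to $f$. The delicate step is ensuring that $f$ is controlled well enough for the inversion to yield $f(x+h)/f(x)\to 1$ rather than merely $g(x+h)/g(x)\to 1$, and this is exactly where hypothesis~\eqref{conditionjmrs} is used: it bounds the contribution to $g(x)$ coming from $y\to-\infty$, i.e., from the mass of $\nu$ near the origin in the original scale, which could otherwise mask the tail of $f$. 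I expect the main obstacle to be running this Tauberian inversion cleanly in the $\sigma$-finite measure category, with slow-variation asymptotics in place of the more standard $L^1$ convergence, and quantifying the smallness of the near-zero contribution through~\eqref{conditionjmrs}. Once slow variation of $f$ is obtained, regular variation of $\nu(\cdot,\infty)$ of index $-\alpha$ follows, and the limit constant $\int y^\alpha\rho(dy)=\widehat{\tilde\rho_\alpha}(0)$ is read off by a direct measure-theoretic Breiman-type computation, now legitimate because all required moments of $\rho$ are finite.

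For the converse, suppose~\eqref{conditionjmrs1} fails at some $\beta_0\in\mathbb R$, so $\widehat{\tilde\rho_\alpha}(\beta_0)=0$. I would construct $\tilde\nu$ on $\mathbb R$ whose tail has the form $\tilde\nu(x,\infty)=e^{-\alpha x}\bigl(1+c\cos(\beta_0 x+\phi)\bigr)$ for some $c\in(0,1)$ and $\phi\in\mathbb R$ chosen so that $\tilde\nu(x,\infty)$ is nonnegative and nonincreasing in $x$ for large $x$. Pulling back by $y\mapsto e^y$ produces a $\sigma$-finite measure $\nu$ on $(0,\infty)$ whose ratio $\nu(tx,\infty)/\nu(x,\infty)$ oscillates with $x$ and thus fails to converge, so $\nu$ is not regularly varying. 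On the other hand, the convolution $f*\tilde\rho_\alpha$ annihilates the frequency $\beta_0$ by hypothesis, so the oscillatory component is killed and $\nu\circledast\rho$ retains only the smooth $x^{-\alpha}$ tail. A routine cutoff of $\nu$ near the origin guarantees that~\eqref{conditionjmrs} holds, completing the construction.
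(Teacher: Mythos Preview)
The paper does not prove this theorem: it is quoted verbatim as Theorem~2.3 of \cite{JMRS:2009} and used as a black box in the proofs of Theorems~\ref{main result} and~\ref{thm: converse}. So there is no ``paper's own proof'' to compare against.

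That said, your sketch is the standard route and is essentially the strategy of \cite{JMRS:2009}: pass to logarithms, exponentially tilt by $e^{\alpha y}$ so that the Mellin condition~\eqref{conditionjmrs1} becomes a non-vanishing Fourier transform hypothesis, and invoke a Wiener--type Tauberian argument to transfer slow variation from $g$ back to $f$. Your identification of~\eqref{conditionjmrs} as the hypothesis controlling the contribution of $\nu$ near the origin is also correct. For the converse, the paper's Remark~\ref{rem: jmrs} records exactly the construction you describe: take $d\nu = g\,d\nu_\alpha$ with $g(x)=1+a\cos(\beta_0\log x)+b\sin(\beta_0\log x)$, $0<a^2+b^2\le1$, where $\nu_\alpha(x,\infty)=x^{-\alpha}$; the oscillatory part is annihilated by $\rho$ because of the zero of the Mellin transform at $\alpha+i\beta_0$, leaving $\nu\circledast\rho=\|\rho\|_\alpha\nu_\alpha$.

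One caution on your sketch: the phrase ``Wiener's Tauberian theorem permits inversion'' hides real work. The classical Wiener theorem operates in $L^1$ or on bounded functions, and here $f(x)=e^{\alpha x}\tilde\nu(x,\infty)$ is a priori only known to be nonnegative; the actual argument in \cite{JMRS:2009} requires a version adapted to slowly varying asymptotics (Drasin--Shea--Jordan type Tauberian theory), and condition~\eqref{conditionjmrs} is used precisely to establish the two-sided boundedness needed to make that machinery apply. Your plan is correct in outline, but filling in that step is the substance of the proof, not a formality.
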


\begin{remark} \label{rem: jmrs}
\cite{JMRS:2009} gave an explicit construction of the $\sigma$-finite measure $\nu$ in Theorem~\ref{JMRS} above. In fact, if~\eqref{conditionjmrs1} fails for $\beta=\beta_0$, then, for any real number $a$ and $b$ satisfying $0<a^2+b^2\le 1$, we can define $g(x) = 1 + a \cos(\beta_0 \log x) + b \sin(\beta_0 \log x)$ and $d\nu = g d\nu_\alpha$ will qualify for the measure in the converse part, where $\nu_\alpha$ is the $\sigma$-finite measure given by $\nu_\alpha(x,\infty)=x^{-\alpha}$ for any $x>0$.

It is easy to check that $0\le g(x)\le2$ for all $x>0$ and hence
\begin{equation} \label{eq: bd nu}
\nu(x,\infty)\le2x^{-\alpha}.
\end{equation}
Also, it is known from Theorem~2.1 of \cite{JMRS:2009} that
\begin{equation} \label{eq: prod conv}
\nu\circledast\rho=\|\rho\|_\alpha\nu_\alpha,
\end{equation}
where $\|\rho\|_\alpha = \int_0^\infty y^\alpha \rho(dy) <\infty$, by~\eqref{conditionjmrs0}.
\end{remark}

We are now ready to state the main result of this section.
\begin{theorem}\label{main result}
Let $\{X_t,t\geq1\}$ be a sequence of identically distributed, pairwise asymptotically independent positive random variables and $\{\Theta_t,t\geq1\}$ be a sequence of positive random variables independent of $\{X_t\}$, such that $\xinf=\sum_{t=1}^{\infty}\Theta_tX_t$ is finite with probability one and has regularly varying tail of index $-\alpha$, where $\alpha>0$. Let $\{\Theta_t,t\geq1\}$ satisfy the appropriate RW condition~\eqref{RW1} or~\eqref{RW2}, depending on the value of $\alpha$. If we further have, for all $\beta\in \mathbb{R}$,
\begin{equation} \label{eq: mellin}
\sum_{t=1}^{\infty}\E[\Theta_t^{\alpha+i\beta}]\neq 0,
\end{equation}
then $X_1$ has regularly varying tail of index $-\alpha$ and, as $x\to\infty$,
$$\prob[\xinf > x]\sim \prob[X_1>x]\sum_{t=1}^{\infty}\E[\Theta_t^{\alpha}] \ \text{as}\ x\rightarrow\infty.$$
\end{theorem}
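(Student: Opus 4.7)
My plan is to deduce this from the converse part of Theorem~\ref{JMRS}. Set $\nu := \prob[X_1 \in \cdot]$ (a probability measure on $(0,\infty)$) and $\rho := \sum_{t=1}^\infty \prob[\Theta_t \in \cdot]$ (a $\sigma$-finite measure on $(0,\infty)$). Since each $X_t$ is distributed as $X_1$ and independent of $\Theta_t$, Fubini gives $(\nu\circledast\rho)(x,\infty)=\sum_{t=1}^\infty\prob[\Theta_tX_t>x]$. The integrability condition~\eqref{conditionjmrs0} on $\rho$ reads $\sum_t\E[\Theta_t^{\alpha-\epsilon}\vee\Theta_t^{\alpha+\epsilon}]<\infty$, which follows from~\eqref{RW1} for $\alpha<1$ and from~\eqref{RW2} combined with Remark~\ref{rem: RW} for $\alpha\ge1$. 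The non-vanishing condition~\eqref{conditionjmrs1} is precisely hypothesis~\eqref{eq: mellin}.

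The central technical step is to prove
\begin{equation*}
\sum_{t=1}^\infty\prob[\Theta_tX_t>x]\sim\prob[\xinf>x]\quad\text{as }x\to\infty,
\end{equation*}
so that $\nu\circledast\rho$ inherits regular variation of index $-\alpha$ from $\xinf$. For the direction $\sum_t\prob[\Theta_tX_t>x]\le(1+\lito(1))\prob[\xinf>x]$, I would start from $\prob[\xinf>x]\ge\prob[\bigvee_{t\le m}\Theta_tX_t>x]$ and apply Bonferroni to get
\begin{equation*}
\sum_{t\le m}\prob[\Theta_tX_t>x]\le\prob[\xinf>x]+\sum_{s<t\le m}\prob[\Theta_sX_s>x,\Theta_tX_t>x],
\end{equation*}
then show the joint probabilities sum to $\lito(\prob[\xinf>x])$ by conditioning on $(\Theta_s,\Theta_t)$ and invoking pairwise asymptotic independence of $\{X_t\}$, with the $(\alpha+\epsilon)$-moment bounds supplied by the RW conditions serving as Pratt-style dominators; sending $m\to\infty$ gives the full sum. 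For the reverse direction, decompose
\begin{equation*}
\prob[\xinf>x]\le\prob\Bigl[\bigvee_t\Theta_tX_t>(1-\delta)x\Bigr]+\prob\Bigl[\sum_t\Theta_tX_t\mathbbm{1}_{[\Theta_tX_t\le(1-\delta)x]}>\delta x\Bigr],
\end{equation*}
bound the first term by $\sum_t\prob[\Theta_tX_t>(1-\delta)x]$, and control the second by the Markov/Minkowski estimates from the proof in Section~\ref{sec:direct} under the RW moments; letting $\delta\downarrow 0$ and using regular variation of $\prob[\xinf>x]$ together with the already-established upper bound matches the two sides.

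To finish, I would verify~\eqref{conditionjmrs}. Chebyshev combined with the $\ell^p$-norm inequality $(\sum_t p_t)^{1/(\alpha+\epsilon)}\le\sum_t p_t^{1/(\alpha+\epsilon)}$ yields, under either RW condition, $\rho(u,\infty)=\sum_t\prob[\Theta_t>u]\le Cu^{-(\alpha+\epsilon)}$ for some $C<\infty$, so $\int_0^b\rho(x/y,\infty)\nu(dy)\le Cx^{-(\alpha+\epsilon)}\int_0^b y^{\alpha+\epsilon}\nu(dy)$. Dividing by $(\nu\circledast\rho)(x,\infty)\sim x^{-\alpha}L(x)$, with $L$ the slowly varying factor of the tail of $\xinf$, and using that $x^\epsilon L(x)\to\infty$ for any $\epsilon>0$, the ratio vanishes as $x\to\infty$ uniformly in $b$. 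Theorem~\ref{JMRS} then delivers regular variation of $\nu$ of index $-\alpha$ and the exact asymptotic $\prob[\xinf>x]\sim\prob[X_1>x]\sum_t\E[\Theta_t^\alpha]$.

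The principal obstacle will be the Bonferroni correction in the first inequality: controlling $\sum_{s<t}\prob[\Theta_sX_s>x,\Theta_tX_t>x]$ cannot be outsourced to Lemma~\ref{joint asymptotic independence}, because that result presupposes regular variation of $X_1$, which is the very conclusion we are chasing. The remedy is a direct conditioning argument relying solely on pairwise asymptotic independence of $\{X_t\}$ together with the summable $(\alpha+\epsilon)$-moments of $\{\Theta_t\}$ from the RW conditions, so that Pratt's lemma can be invoked after conditioning on $(\Theta_s,\Theta_t)$ without any appeal to Breiman's theorem.
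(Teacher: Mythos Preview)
Your overall strategy coincides with the paper's: reduce to Theorem~\ref{JMRS} by proving the tail equivalence $\sum_t\prob[\Theta_tX_t>x]\sim\prob[\xinf>x]$ (this is the paper's Proposition~\ref{prop: equiv}), then verify~\eqref{conditionjmrs0}--\eqref{conditionjmrs}. Your check of~\eqref{conditionjmrs} is fine (the paper simply quotes Remark~2.4 of \cite{JMRS:2009}, since $\nu$ is a probability measure), and your diagnosis of the Bonferroni obstacle is exactly right.

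There is, however, a genuine gap in your reverse inequality $\prob[\xinf>x]\le(1+\lito(1))\sum_t\prob[\Theta_tX_t>x]$. The Markov/Minkowski estimates from Section~\ref{sec:direct} bound the truncated-sum term by a constant times $\sum_t\int_0^\infty R(x/u)\,G_t(du)$ (or its $\ell^{1/(\alpha+\epsilon)}$ analogue), which is of the \emph{same order} as $\prob[\xinf>x]$, not $\lito(\prob[\xinf>x])$; moreover the implicit constant behaves like $\delta^{-1}$ (resp.\ $\delta^{-(\alpha+\epsilon)}$), so sending $\delta\downarrow0$ makes matters worse. In Section~\ref{sec:direct} that remainder is negligible only because the sum there runs over $t>m$ and one then lets $m\to\infty$; in your decomposition the sum runs over all $t$. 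A parallel issue hits your forward inequality: ``sending $m\to\infty$'' after Bonferroni needs control of $\sum_{t>m}\prob[\Theta_tX_t>x]$, which you have not secured.

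The paper closes both holes with a preliminary step you are missing: since $\Theta_1>0$, pick $\eta>0$ with $\prob[\Theta_1>\eta]>0$ and note $\prob[\xinf>\eta x]\ge\prob[\Theta_1X_1>\eta x,\Theta_1>\eta]\ge\prob[X_1>x]\,\prob[\Theta_1>\eta]$, so that $\prob[X_1>x]\le C\,\prob[\xinf>x]=:R(x)$ with $R$ regularly varying of index $-\alpha$. This domination (i) supplies the integrable majorant needed for the Pratt/DCT step in the Bonferroni correction (the paper's Lemma~\ref{lem: approx indep}), and (ii) feeds a tail-negligibility lemma (Lemma~\ref{lem: tail}) yielding $\lim_{m}\limsup_{x}\prob[\sum_{t>m}\Theta_tX_t>x]/R(x)=0$ and $\lim_{m}\limsup_{x}\sum_{t>m}\prob[\Theta_tX_t>x]/R(x)=0$. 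With these, the paper first shows $\prob[\sum_{t\le m}\Theta_tX_t>x]/\prob[\xinf>x]\to1$, and only then compares the \emph{finite} sum $\prob[\sum_{t\le m}\Theta_tX_t>x]$ with $\sum_{t\le m}\prob[\Theta_tX_t>x]$ via two-sided Bonferroni-type bounds (Lemma~\ref{lem: comp}), where a fixed finite number of cross terms can safely be shown to be $\lito(R(x))$.
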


We shall prove Theorem~\ref{main result} in several steps. We collect the preliminary steps, which will also be useful for a converse to Theorem~\ref{main result}, into three separate lemmas. The first lemma controls the tail of the sum $\xinf$.
\begin{lemma} \label{lem: tail}
Let $\{X_t\}$ be a sequence of identically distributed positive random variables and $\{\Theta_t\}$ be a sequence of positive random variables independent of $\{X_t\}$. Suppose that the tail of $X_1$ is dominated by a bounded regularly varying function $R$ of index $-\alpha$, where $\alpha>0$, that is, for all $x>0$,
\begin{equation} \label{eq: domination}
\prob[X_1>x] \le R(x).
\end{equation}
Also assume that $\{\Theta_t\}$ satisfies the appropriate RW condition depending on the value of $\alpha$. Then,
\begin{align*}
\lim_{m\to\infty} \limsup_{x\to\infty} \frac{\prob[\xupk>x]}{R(x)} &= 0 \intertext{and}
\lim_{m\to\infty} \limsup_{x\to\infty} \sum_{t=m+1}^\infty \frac{\prob[\Theta_t X_t>x]}{R(x)} &= 0.
\end{align*}
\end{lemma}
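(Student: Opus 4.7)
The plan is to adapt the tail-negligibility computation from the proof of the theorem of Section~\ref{sec:direct}, with two modifications: wherever $\prob[X_1>x]$ appears I replace it by the upper bound $R(x)$, and in place of the DZ conditions I appeal directly to Potter's bounds, which are available because the full RW moment condition supplies both $\E[\Theta_t^{\alpha+\epsilon}]$ and $\E[\Theta_t^{\alpha-\epsilon}]$.

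I would first dispatch the second conclusion, since it is used again in proving the first. For each $t$ and any $\delta\in(0,\alpha)$, the domination~\eqref{eq: domination} gives $\prob[\Theta_tX_t>x]\le\int_0^\infty R(x/v)G_t(dv)$, which I split at $v=x/A$, where $A$ is chosen large enough for Potter's inequality $R(x/v)/R(x)\le C(v^{\alpha-\delta}+v^{\alpha+\delta})$ to hold whenever $x/v\ge A$. The integral over $(0,x/A]$ is bounded by $CR(x)(\E[\Theta_t^{\alpha-\delta}]+\E[\Theta_t^{\alpha+\delta}])$, while the integral over $(x/A,\infty)$ is dominated by $\|R\|_\infty\prob[\Theta_t>x/A]\le\|R\|_\infty(A/x)^{\alpha+\delta}\E[\Theta_t^{\alpha+\delta}]$; after division by $R(x)=x^{-\alpha}L(x)$ this last piece is $\lito(\E[\Theta_t^{\alpha+\delta}])$ uniformly in $t$, because $x^\delta L(x)\to\infty$. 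Choosing $\delta\le\epsilon$ and using the elementary comparison $\E[\Theta_t^{\alpha\pm\delta}]\le\E[\Theta_t^{\alpha-\epsilon}]+\E[\Theta_t^{\alpha+\epsilon}]$ (obtained by splitting on $\{\Theta_t\le1\}$ and $\{\Theta_t>1\}$), the RW condition together with Remark~\ref{rem: RW} ensures summability of the dominating quantity, and the second conclusion follows on taking $\limsup_x$ and then $m\to\infty$.

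For the first conclusion, I use the familiar splitting
$$\prob[\xupk>x]\le\sum_{t=m+1}^\infty\prob[\Theta_tX_t>x]+\prob\biggl[\sum_{t=m+1}^\infty\Theta_tX_t\mathbbm{1}_{[\Theta_tX_t\le x]}>x\biggr],$$
whose first term is controlled by the preceding step. The trimmed-sum term is bounded by Markov's inequality with exponent $1$ when $\alpha<1$, and by Markov's inequality with exponent $\alpha+\epsilon$ combined with Minkowski when $\alpha\ge1$, in direct analogy with~\eqref{eq: term 2 alpha small} and~\eqref{eq: term 2 alpha large}. This reduces the estimate to $\E[X_1^p\mathbbm{1}_{[X_1\le y]}]$ for $p\in\{1,\alpha+\epsilon\}$, for which Karamata's theorem yields the bound $Cy^pR(y)$ when $y\ge y_0$, while the trivial bound $y^p$ handles $y<y_0$. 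Splitting the ensuing $G_t$-integral at $v=x/y_0$, the first regime reproduces precisely the integral already estimated in the second conclusion, and the second regime contributes only $x^p\prob[\Theta_t>x/y_0]$, which is absorbed by a further Markov estimate on $\Theta_t$ using once more that $x^{\alpha+\epsilon}R(x)\to\infty$.

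The main technical friction lies in the case $\alpha\ge1$: Minkowski must be applied with the exponent $\alpha+\epsilon$ dictated by~\eqref{RW2}, whereas Potter's bound naturally yields moments of the form $\E[\Theta_t^{\alpha\pm\delta}]$. The Lyapunov-type interpolation mentioned above, combined with the subadditivity $(a+b)^{1/(\alpha+\epsilon)}\le a^{1/(\alpha+\epsilon)}+b^{1/(\alpha+\epsilon)}$, allows every term emerging after the Minkowski step to be dominated by $\sum_t(\E[\Theta_t^{\alpha-\epsilon}]+\E[\Theta_t^{\alpha+\epsilon}])^{1/(\alpha+\epsilon)}$, whose finiteness is exactly the content of~\eqref{RW2}. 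Once this bookkeeping is in place, the proof is a routine reprise of the tail estimate in Section~\ref{sec:direct}.
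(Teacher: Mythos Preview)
Your proposal is correct and follows essentially the same route as the paper: the same max-plus-trimmed-sum decomposition~\eqref{two terms tail negligible}, Markov/Minkowski on the trimmed sum leading to~\eqref{eq: term 2 alpha small}--\eqref{eq: term 2 alpha large}, the Karamata bound~\eqref{eq: karamata}, and then Potter's inequality applied to $\int_0^\infty R(x/v)/R(x)\,G_t(dv)$ with the tail piece handled by Markov on $\Theta_t^{\alpha+\epsilon}$ and the boundedness of $R$. The only cosmetic differences are that you prove the second conclusion first and split the Potter integral at a single point $v=x/A$ using a two-sided bound, whereas the paper splits at $v=1$ and $v=x/x_0$ and takes $\delta=\epsilon$ directly (avoiding your interpolation step), but the substance is identical.
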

\begin{proof}
From~\eqref{two terms tail negligible}, we have
\begin{equation} \label{eq: tail bd conv}
\prob\left[\xupk>x\right] \le \sum_{t=m+1}^\infty \prob\left[\Theta_tX_t>x\right] + \prob\left[\sum_{t=m+1}^\infty \Theta_tX_t \mathbbm 1_{[\Theta_tX_t\le x]}>x\right].
\end{equation}
Using~\eqref{eq: domination}, the summands of the first term on the right side of~\eqref{eq: tail bd conv} can be bounded as
\begin{equation} \label{eq: first term conv}
\prob[\Theta_t X_t > x] = \int_0^\infty \prob[X_t > x/u] G_t(du) \leq \int_0^\infty R(x/u) G_t(du).
\end{equation}

Before analyzing the second term on the right side of~\eqref{eq: tail bd conv}, observe that, for $\gamma>\alpha$, we have, using Fubini's theorem,~\eqref{eq: domination} and Karamata's theorem successively
$$\E\left[X_t^\gamma \mathbbm 1_{[X_t\le x]}\right] \le \gamma \int_0^x u^{\gamma-1} \prob[X_t>u] du \le \gamma \int_0^x u^{\gamma-1} R(u) du \sim \frac{\gamma}{\gamma-\alpha} x^\gamma R(x).$$
Thus, there exists constant $M \equiv M(\gamma)$, such that, for all $x>0$,
\begin{equation} \label{eq: karamata}
x^{-\gamma} \E\left[X_t^\gamma \mathbbm 1_{[X_t\le x]}\right] \le M R(x).
\end{equation}

We bound the second term on the right side of~\eqref{eq: tail bd conv}, using~\eqref{eq: karamata}, separately for the cases $\alpha<1$ and $\alpha\ge 1$. For $\alpha<1$, we use~\eqref{eq: term 2 alpha small} and~\eqref{eq: karamata} with $\gamma=1$, to get
\begin{equation} \label{eq: second term alpha small}
\prob\left[\sum_{t=m+1}^\infty \Theta_tX_t \mathbbm 1_{[\Theta_tX_t\le x]}>x\right] \le M(1) \sum_{t=m+1}^\infty \int_0^\infty R(x/u) G_t(du).
\end{equation}
For $\alpha\ge 1$, we use~\eqref{eq: term 2 alpha large} and~\eqref{eq: karamata} with $\gamma=\alpha+\epsilon$, to get
\begin{equation} \label{eq: second term alpha large}
\prob\left[\sum_{t=m+1}^\infty \Theta_tX_t \mathbbm 1_{[\Theta_tX_t\le x]}>x\right] \le M(\alpha+\epsilon) \left[ \sum_{t=m+1}^\infty \left( \int_0^\infty R(x/u) G_t(du) \right)^{\frac1{\alpha+\epsilon}} \right]^{\alpha+\epsilon}.
\end{equation}

Combining~\eqref{eq: first term conv},~\eqref{eq: second term alpha small} and~\eqref{eq: second term alpha large} with the bound in~\eqref{eq: tail bd conv}, the proof will be complete if we show
\begin{equation} \label{eq: aim}
\begin{split}
&\lim_{m\to\infty} \limsup_{x\to\infty} \sum_{t=m+1}^\infty \int_0^\infty \frac{R(x/u)}{R(x)} G_t(du) = 0, \text{ for $\alpha<1$},\\
\text{and } &\lim_{m\to\infty} \limsup_{x\to\infty} \sum_{t=m+1}^\infty \left( \int_0^\infty \frac{R(x/u)}{R(x)} G_t(du) \right)^{\frac1{\alpha+\epsilon}} =0, \text{ for $\alpha\ge1$}.
\end{split}
\end{equation}
Note that, for $\alpha\ge 1$, as in Remark~\ref{rem: RW}, the second limit above gives the first one as well.

We bound the integrand using a variant of Potter's bound  \citep[see][Lemma 2.2 ]{resnick:willekens:1991}. Let $\epsilon>0$ be as in the RW conditions. Then there exists a $x_0$ and a constant $M>0$ such that, for $x>x_0$, we have
\begin{equation} \label{eq: potter}
\frac{R(x/u)}{R(x)}\leq
\begin{cases}
Mu^{\alpha-\epsilon},  &\text{if $u<1$,}\\
Mu^{\alpha +\epsilon},  &\text{if $1\leq u\leq x/x_0$.}
\end{cases}
\end{equation}

We split the range of integration in~\eqref{eq: aim} into three intervals, namely $(0,1]$, $(1,x/x_0]$ and $(x/x_0,\infty)$. For $x>x_0$, we bound the integrand over the first two integrals using~\eqref{eq: potter} and hence the integrals get bounded by a multiple of $\E[\Theta_t^{\alpha-\epsilon}]$ and $\E[\Theta_t^{\alpha+\epsilon}]$ respectively. As $R$ is bounded, by Markov's inequality, the third integral gets bounded by a multiple of $x_0^{\alpha+\epsilon}\E[\Theta_t^{\alpha+\epsilon}]/\{x^{\alpha+\epsilon}R(x)\}$. Putting all the bounds together, we have
$$\int_0^\infty \frac{R(x/u)}{R(x)} G_t(du) \le M \left(\E[\Theta_t^{\alpha-\epsilon}] + \E[\Theta_t^{\alpha+\epsilon}] + \frac{x_0^{\alpha+\epsilon}\E[\Theta_t^{\alpha+\epsilon}]}{x^{\alpha+\epsilon}R(x)}\right).$$
Then,~\eqref{eq: aim} holds for $\alpha<1$ using the condition~\eqref{RW1} and the fact that $R$ is regularly varying of index $-\alpha$. For $\alpha\ge 1$, we need to further observe that, as $\alpha+\epsilon>1$, we have
\begin{multline*}
\left(\int_0^\infty \frac{R(x/u)}{R(x)} G_t(du)\right)^{\frac1{\alpha+\epsilon}} \le M^{\frac1{\alpha+\epsilon}} \left[ \left( \E[\Theta_t^{\alpha-\epsilon}] + \E[\Theta_t^{\alpha+\epsilon}]\right) + \frac{x_0^{\alpha+\epsilon} \E[\Theta_t^{\alpha+\epsilon}]}{x^{\alpha+\epsilon}R(x)}\right]^{\frac1{\alpha+\epsilon}}\\
\le M^{\frac1{\alpha+\epsilon}} \left(\E[\Theta_t^{\alpha-\epsilon}] + \E[\Theta_t^{\alpha+\epsilon}]\right)^{\frac1{\alpha+\epsilon}} + \frac{x_0 \left(\E[\Theta_t^{\alpha+\epsilon}]\right)^{\frac1{\alpha+\epsilon}}}{xR(x)^{\frac1{\alpha+\epsilon}}}
\end{multline*}
and~\eqref{eq: aim} holds using the condition~\eqref{RW2} and the fact that $R$ is regularly varying of index $-\alpha$.
\end{proof}

The next lemma considers the joint distribution of $(\Theta_1 X_1, \Theta_2 X_2)$ and shows they are ``somewhat'' asymptotically independent, if $(X_1, X_2)$ are asymptotically independent.
\begin{lemma} \label{lem: approx indep}
Let $(X_1, X_2)$ and $(\Theta_1, \Theta_2)$ be two independent random vectors, such that each coordinate of either vector is positive. We assume that $X_1$ and $X_2$ have same distribution with their common tail dominated by a regularly varying function $R$ of index $-\alpha$ with $\alpha>0$, as in~\eqref{eq: domination}. We also assume that $R$ stays bounded away from $0$ on any bounded interval. We further assume that both $\Theta_1$ and $\Theta_2$ have $(\alpha+\epsilon)$-th moments finite. Then
$$\lim_{x\to\infty} \frac{\prob[\Theta_1 X_1 > x, \Theta_2 X_2 > x]}{R(x)} = 0.$$
\end{lemma}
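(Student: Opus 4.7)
The plan is to reduce the two-dimensional weighting to a one-dimensional integral by coupling both $\Theta_1$ and $\Theta_2$ to their maximum $W := \Theta_1 \vee \Theta_2$, and then to exploit the (implicit) asymptotic independence of $X_1, X_2$ through a dominated-convergence argument paralleling the proof of Lemma~\ref{joint asymptotic independence}. First I would note that, since $\Theta_i \le W$, we have $\prob[\Theta_1 X_1 > x, \Theta_2 X_2 > x] \le \prob[W X_1 > x, W X_2 > x]$; conditioning on $W$, whose law $H$ is independent of $(X_1, X_2)$ and satisfies $\E[W^{\alpha+\epsilon}] \le \E[\Theta_1^{\alpha+\epsilon}] + \E[\Theta_2^{\alpha+\epsilon}] < \infty$, this is at most $\int_0^\infty \prob[X_1 > x/w, X_2 > x/w]\, H(dw)$.

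The central step will be the multiplicative decomposition
\[
\frac{\prob[X_1 > x/w, X_2 > x/w]}{R(x)} \le \frac{\prob[X_1 > x/w, X_2 > x/w]}{\prob[X_1 > x/w]} \cdot \frac{R(x/w)}{R(x)},
\]
valid because $\prob[X_1 > x/w] \le R(x/w)$. For each fixed $w > 0$ the first factor goes to $0$ by asymptotic independence of $(X_1, X_2)$, while the second is Potter-dominated: for some $x_0, M$, whenever $x \ge x_0$ and $w \le x/x_0$,
\[
R(x/w)/R(x) \le M \max(w^{\alpha-\epsilon}, w^{\alpha+\epsilon}),
\]
which is $H$-integrable by the moment assumption. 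Dominated convergence then gives that the integral over $(0, x/x_0]$ of the left-hand side above tends to $0$.

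What remains is the tail $(x/x_0, \infty)$, where Potter is unavailable because $x/w$ stays bounded. I would bound the integrand crudely by $1$ and apply Markov to $W$, obtaining $\prob[W > x/x_0] \le \E[W^{\alpha+\epsilon}] (x_0/x)^{\alpha+\epsilon}$; writing $R(x) = x^{-\alpha} L(x)$ with $L$ slowly varying, this tail contribution is at most $C/(x^\epsilon L(x)) \to 0$, since $x^\epsilon L(x) \to \infty$. The main obstacle will be setting up the Potter-type integrability uniformly as the domain $(0, x/x_0]$ grows with $x$, but this is exactly the mechanism already used in Lemma~\ref{joint asymptotic independence}. I note that, consistent with the prose preceding the lemma, asymptotic independence of $(X_1, X_2)$ is an implicit hypothesis---without it (e.g., $X_1 = X_2$), the conclusion fails.
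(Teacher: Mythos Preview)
Your proof is correct and follows essentially the same route as the paper's: the paper reduces to a one-dimensional integral by splitting the double integral over $\{u>v\}$ and $\{u\le v\}$ and bounding by $\int \prob[X_1>x/u,X_2>x/u]\,(G_1+G_2)(du)$ rather than via $W=\Theta_1\vee\Theta_2$, but the subsequent Potter-bound/dominated-convergence argument on $(0,x/x_0]$ and the Markov bound on $(x/x_0,\infty)$ are identical. Your observation that asymptotic independence of $(X_1,X_2)$ is used but not stated in the lemma is correct and matches what the paper's own proof actually invokes.
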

\begin{proof}
By asymptotic independence and~\eqref{eq: domination}, we have
\begin{equation} \label{eq: jt neg}
\prob[X_1>x,X_2>x] = \lito(R(x)).
\end{equation}
Further, since $R$ is bounded away from $0$ on any bounded interval, $\prob[X_1>x,X_2>x]$ is bounded by a multiple of $R(x)$. Then,
\begin{align*}
\frac{\prob[\Theta_1X_1>x,\Theta_2X_2>x]}{R(x)} =&\int\limits_0^{\infty}\int\limits_0^{\infty}\frac{\prob[X_1>x/u,X_2>x/v]}{R(x)}G(du,dv)\\
=&\iint\limits_{u>v}+\iint\limits_{u\leq v}\frac{\prob[X_1>x/u,X_2>x/v]}{R(x)}G(du,dv)\\
\leq &\int_0^{\infty} \frac{\prob[X_1>x/u,X_2>x/u]}{R(x)} (G_1+G_2)(du)\\
= &\int_0^\infty \frac{\prob[X_1>x/u,X_2>x/u]}{R(x)} \mathbbm 1_{[0,x/x_0]}(u) (G_1+G_2)(du)\\ &\quad + \frac{x_0^{\alpha+\epsilon} \left( \E[\Theta_1^{\alpha+\epsilon}] + \E[\Theta_1^{\alpha+\epsilon}] \right)}{x^{\alpha+\epsilon} R(x)},
\end{align*}
for any $x_0>0$. The integrand in the first term goes to $0$, using~\eqref{eq: jt neg} and the regular variation of $R$. Further choose $x_0$ as in Potter's bound~\eqref{eq: potter}. Then, the integrand of the first term is bounded by a multiple of $1+u^{\alpha+\epsilon}$, which is integrable with respect to $G_1+G_2$. So, by Dominated Convergence Theorem, the first term goes to $0$. For this choice of $x_0$, the second term also goes to $0$, as $R$ is regularly varying of index $-\alpha$.
\end{proof}

The next lemma compares $\sum_{t=1}^m \prob[\Theta_t X_t > x]$ and $\prob\left[ \sum_{t=1}^m \Theta_t X_t > x \right]$.
\begin{lemma} \label{lem: comp}
Let $\{X_t\}$ and $\{\Theta_t\}$ be two sequences of positive random variables.
Then, we have, for any $\frac12<\delta<1$ and $m\ge 2$,
\begin{align}
\prob\left[ \sum_{t=1}^m \Theta_t X_t > x \right] \geq &\sum_{t=1}^m \prob[\Theta_t X_t > x] - \underset{1\le s\neq t\le m}{\sum\sum} \prob[\Theta_s X_s > x, \Theta_t X_t > x] \label{eq: comp up}
\intertext{and}
\prob\left[ \sum_{t=1}^m \Theta_t X_t > x \right] \leq &\sum_{t=1}^m \prob[\Theta_t X_t > x] \nonumber\\
&\quad +\underset{1\le s\neq t\le m}{\sum\sum} \prob\left[\Theta_s X_s > \frac{1-\delta}{m-1} x, \Theta_t X_t > \frac{1-\delta}{m-1} x\right]. \label{eq: comp dn}
\end{align}
\end{lemma}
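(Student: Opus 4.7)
The plan is to derive both inequalities by elementary set manipulations: the lower bound is a second-order Bonferroni estimate, and the upper bound is a pigeonhole argument that exploits the scale separation between a singleton threshold and the pair threshold $(1-\delta)x/(m-1)$.

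For the lower bound~\eqref{eq: comp up}, set $A_t = \{\Theta_t X_t > x\}$. Since each $\Theta_t X_t$ is nonnegative, any single $A_t$ forces $\sum_{t=1}^m \Theta_t X_t > x$, giving the inclusion $\{\sum_{t=1}^m \Theta_t X_t > x\} \supseteq \bigcup_t A_t$. The classical second-order Bonferroni inequality then yields $\prob[\bigcup_t A_t] \geq \sum_t \prob[A_t] - \sum_{1\leq s<t\leq m} \prob[A_s \cap A_t]$. Since the unordered double sum $\underset{s\neq t}{\sum\sum}$ in~\eqref{eq: comp up} equals twice this ordered sum, the stated lower bound subtracts more and is therefore weaker than Bonferroni's, so it follows at once.

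For the upper bound~\eqref{eq: comp dn}, introduce also $B_t = \{\Theta_t X_t > (1-\delta)x/(m-1)\}$. The core claim is the set inclusion
\[
\Bigl\{\sum_{t=1}^m \Theta_t X_t > x\Bigr\} \subseteq \bigcup_{t=1}^m A_t \cup \bigcup_{s\neq t}\bigl(B_s \cap B_t\bigr),
\]
from which~\eqref{eq: comp dn} follows immediately by subadditivity. The inclusion is verified by contrapositive: suppose $\sum_t \Theta_t X_t > x$, that no $A_t$ occurs, and that at most one $B_t$ occurs. If no $B_t$ occurs, then $\sum_t \Theta_t X_t \leq m(1-\delta)x/(m-1) \leq x$, where the last inequality uses $m(1-\delta) \leq m-1$, which is ensured by $\delta > 1/2$ and $m \geq 2$. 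If exactly one $B_{t_0}$ occurs, the remaining $m-1$ terms contribute at most $(1-\delta)x$ in aggregate, while the singleton threshold caps $\Theta_{t_0} X_{t_0}$ so that the total is again at most $x$. Either case contradicts $\sum_t \Theta_t X_t > x$, and so at least two $B_t$'s must hold, giving the inclusion.

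The only delicate step is this two-case pigeonhole. It hinges on the trivial arithmetic identity $\delta + (1-\delta) = 1$, which makes the singleton threshold and the $(m-1)$-fold aggregate of the pair threshold complement each other to exactly $x$. The range $\frac12 < \delta < 1$ is needed only to dispatch the empty-$B$ case uniformly in $m \geq 2$; otherwise the lemma is purely combinatorial and requires no tail-behavior or regular-variation input.
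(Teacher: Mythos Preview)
Your proof of \eqref{eq: comp up} is correct and is the same Bonferroni argument as the paper's.

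For \eqref{eq: comp dn}, Case~2 of your contrapositive does not close. If exactly one $B_{t_0}$ occurs and no $A_t$ occurs, the only cap available on $\Theta_{t_0}X_{t_0}$ is $x$ (coming from $A_{t_0}^c$), while the remaining $m-1$ terms contribute at most $(1-\delta)x$; the total is therefore bounded only by $(2-\delta)x$, which does not contradict $\sum_t \Theta_t X_t > x$. Your appeal to ``$\delta + (1-\delta) = 1$'' would require the singleton threshold to be $\delta x$, not $x$. In fact the inequality as printed is false: with $m=2$, $\delta=0.6$, and deterministic values $\Theta_1X_1=0.9x$, $\Theta_2X_2=0.3x$, the left side equals $1$ while every term on the right vanishes (the pair threshold is $0.4x$, and $0.3x<0.4x$).

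The paper's own proof actually establishes the version with $\delta x$ in place of $x$ in the first sum on the right of \eqref{eq: comp dn}; this is what is used downstream, where the discrepancy is absorbed by the regular variation of the tail of $X_{(\infty)}$ and by letting $\delta\uparrow 1$. If you redefine $A_t=\{\Theta_tX_t>\delta x\}$, your set-inclusion argument goes through verbatim and proves that corrected inequality; it is then essentially the paper's decomposition (split on $\max_t \Theta_tX_t\gtrless\delta x$, then pigeonhole among the remaining terms), just packaged as a single inclusion rather than a chain of estimates.
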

\begin{proof}
The first inequality~\eqref{eq: comp up} follows from the fact that
$$\left[\sum_{t=1}^m \Theta_t X_t > x\right] \subseteq \bigcup_{t=1}^m [\Theta_t X_t > x]$$ and Bonferroni's inequality.

For the second inequality~\eqref{eq: comp dn}, observe that
$$\prob\left[\xdnk>x\right] \leq \sum_{t=1}^{m}\prob[\xtheta>\delta x]+\prob\left[\sum_{t=1}^{k}\xtheta>x, \bigvee_{t=1}^m\xtheta \leq \delta x\right].$$
Next we estimate the second term as
\begin{align*}
&\prob\left[\sum_{t=1}^{m} \xtheta>x, \bigvee_{t=1}^m \xtheta \leq \delta x\right]\\
=& \prob\left[\sum_{t=1}^m \xtheta>x, \bigvee_{t=1}^m\xtheta \leq \delta x, \bigvee_{t=1}^m \xtheta>\frac xm\right]\\
\leq &\sum_{s=1}^m \prob\left[\sum_{t=1}^{m}\xtheta>x, \bigvee_{t=1}^m \xtheta \leq \delta x, \Theta_{s}X_s>\frac xm \right]\\
\leq &\sum_{s=1}^m \prob\left[\sum_{t=1}^{m}\xtheta>x, \Theta_sX_s \leq \delta x, \Theta_sX_s>\frac xm \right]\\
\leq &\sum_{s=1}^m \prob\left[\sum_{\substack{t=1\\t\neq s}}^k \xtheta>(1-\delta)x, \Theta_sX_s>\frac xm \right]\\
\leq &\underset{1\le s\neq t\le m}{\sum\sum} \prob\left[\xtheta>\frac{1-\delta}{m-1}x, \Theta_sX_s>\frac xm \right]\\
\leq &\underset{1\le s\neq t\le m}{\sum\sum} \prob\left[\xtheta>\frac{1-\delta}{m-1}x, \Theta_sX_s>\frac{1-\delta}{m-1}x \right],
\end{align*}
since $\delta>1/2$ and $m\ge2$ imply $(1-\delta)/(m-1)<1/m$.
\end{proof}

With the above three lemmas, we are now ready to show the tail equivalence of the distribution of $\xinf$ and $\sum_{t=1}^\infty \prob[\Theta_tX_t\in\cdot]$.
\begin{proposition} \label{prop: equiv}
Let $\{X_t,t\geq1\}$ be a sequence of identically distributed, pairwise asymptotically independent positive random variables and $\{\Theta_t,t\geq1\}$ be a sequence of positive random variables independent of $\{X_t\}$, such that $\xinf=\sum_{t=1}^{\infty}\Theta_tX_t$ is finite with probability one and has regularly varying tail of index $-\alpha$, where $\alpha>0$. Let $\{\Theta_t,t\geq1\}$ satisfy the appropriate RW condition~\eqref{RW1} or~\eqref{RW2}, depending on the value of $\alpha$. Then, as $x\to\infty$,
$$\sum_{t=1}^\infty \prob[\Theta_tX_t>x] \sim \prob[\xinf>x].$$
\end{proposition}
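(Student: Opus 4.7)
The plan is to show that $\Psi(x) := \sum_{t=1}^\infty \prob[\Theta_t X_t > x]$ and $\Phi(x) := \prob[\xinf > x]$ are tail-equivalent by combining the three preparatory lemmas with the regular variation of $\Phi$ and a truncation at $m$ terms. The principal obstacle is that we do not yet know that $\prob[X_1 > x]$ is regularly varying, so Lemmas~\ref{lem: tail} and~\ref{lem: approx indep} cannot be invoked until we exhibit a bounded regularly varying dominator $R$ for the marginal tail. Because $\Theta_1 > 0$ almost surely, pick $\eta > 0$ with $p := \prob[\Theta_1 > \eta] > 0$; the inclusion $\{X_1 > x\} \cap \{\Theta_1 > \eta\} \subseteq \{\xinf > \eta x\}$ combined with independence gives
\begin{equation*}
\prob[X_1 > x] \le p^{-1} \prob[\xinf > \eta x] =: R(x),
\end{equation*}
which is bounded, regularly varying of index $-\alpha$, bounded away from $0$ on compacts, and satisfies $R(x)/\Phi(x) \to p^{-1} \eta^{-\alpha}$. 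With this dominator, Lemma~\ref{lem: tail} yields, for every $c > 0$,
\begin{equation*}
\lim_{m \to \infty} \limsup_{x \to \infty} \frac{\prob[\xupk > cx]}{\Phi(x)} = \lim_{m \to \infty} \limsup_{x \to \infty} \sum_{t=m+1}^\infty \frac{\prob[\Theta_t X_t > cx]}{\Phi(x)} = 0,
\end{equation*}
and Lemma~\ref{lem: approx indep} gives $\prob[\Theta_s X_s > cx, \Theta_t X_t > cx] = \lito(\Phi(x))$ for each $s \neq t$, since the $(\alpha + \epsilon)$-th moments of the $\Theta_t$ are finite under~\eqref{RW1} or~\eqref{RW2}.

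For the upper bound $\limsup_x \Psi(x)/\Phi(x) \le 1$, combine the lower half of Lemma~\ref{lem: comp} with $\Phi(x) \ge \prob[\xdnk > x]$ to get
\begin{equation*}
\sum_{t=1}^m \prob[\xtheta > x] \le \Phi(x) + \underset{1 \le s \neq t \le m}{\sum\sum} \prob[\Theta_s X_s > x, \Theta_t X_t > x].
\end{equation*}
Add the tail $\sum_{t > m} \prob[\xtheta > x]$ to both sides, divide by $\Phi(x)$, take $\limsup_x$ (the finite double sum is $\lito(\Phi(x))$ for each fixed $m$ by Lemma~\ref{lem: approx indep}), and then $m \to \infty$ (the tail sum vanishes by Lemma~\ref{lem: tail}) to conclude $\limsup_x \Psi/\Phi \le 1$.

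For the lower bound $\liminf_x \Psi(x)/\Phi(x) \ge 1$, fix $\delta \in (0,1)$ and $\delta' \in (1/2, 1)$, split $\Phi(x) \le \prob[\xdnk > (1 - \delta) x] + \prob[\xupk > \delta x]$, and apply the upper half of Lemma~\ref{lem: comp} (with parameter $\delta'$) to the first term to obtain
\begin{equation*}
\Phi(x) \le \Psi\bigl(\delta'(1 - \delta) x\bigr) + E_{m, \delta, \delta'}(x) + \prob[\xupk > \delta x],
\end{equation*}
where $E_{m, \delta, \delta'}(x) = \lito(\Phi(x))$ as $x \to \infty$ for each fixed $m, \delta, \delta'$ (again by Lemma~\ref{lem: approx indep} applied to the finitely many joint tails). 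Set $\gamma := \delta'(1 - \delta) \in (0, 1)$ and $y := \gamma x$; dividing by $\Phi(y)$ and using $\Phi(y/\gamma)/\Phi(y) \to \gamma^{\alpha}$ from the regular variation of $\Phi$, then taking $\liminf_y$ and letting $m \to \infty$ (which kills the tail-sum term via Lemma~\ref{lem: tail}), we find $\gamma^{\alpha} \le \liminf_y \Psi(y)/\Phi(y)$. Finally send $\delta \to 0$ and $\delta' \to 1$ so that $\gamma \to 1$, giving the reverse inequality. The delicate point is the order of limits: $x \to \infty$ must be taken before $m \to \infty$ and before $\delta, \delta'$ reach their endpoints, since the error $E_{m, \delta, \delta'}$ is negligible only in the innermost limit.
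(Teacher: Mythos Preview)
Your proof is correct and follows essentially the same route as the paper. Both arguments construct the dominator $R$ via $\prob[X_1>x]\le p^{-1}\prob[\xinf>\eta x]$, invoke Lemmas~\ref{lem: tail}--\ref{lem: comp}, and combine truncation at $m$ with regular variation of $\Phi$. The only difference is organizational: the paper first proves the intermediate statement $\lim_{m}\lim_{x}\prob[\xdnk>x]/\Phi(x)=1$ and then passes from $\prob[\xdnk>x]$ to $\sum_{t=1}^m\prob[\xtheta>x]$ via Lemma~\ref{lem: comp}, whereas you merge these steps and carry the extra parameter $\delta'$ directly, which is equally valid.
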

\begin{proof}
We first show that the tail of $X_1$ can be dominated by a multiple of the tail of $\xinf$, so that Lemmas~\ref{lem: tail} and~\ref{lem: approx indep} apply. Note that the tail of $\xinf$ is bounded and stays bounded away from $0$ on any bounded interval. As $\Theta_1$ is a positive random variable, choose $\eta>0$ such that $\prob[\Theta_1>\eta]>0$. Then, for all $x>0$,
$$\prob[\xinf>\eta x] \ge \prob[\Theta_1X_1>\eta x, \Theta_1>\eta] \ge \prob[X_1>x] \prob[\Theta_1>\eta].$$
Further, using the regular variation of the tail of $\xinf$, $X_1$ satisfies~\eqref{eq: domination} with $R$ as a multiple of $\prob[\xinf>\cdot]$. Thus, from Lemmas~\ref{lem: tail} and~\ref{lem: approx indep}, we have,
\begin{align}
\lim_{m\to\infty} \limsup_{x\to\infty} \frac{\prob[\xupk>x]}{\prob[\xinf>x]} &= 0, \label{eq: neg tail one}\\
\lim_{m\to\infty} \limsup_{x\to\infty} \sum_{t=m+1}^\infty \frac{\prob[\xtheta>x]}{\prob[\xinf>x]} &= 0, \label{eq: neg tail two}
\intertext{and, for any $s\neq t$,}
\lim_{x\to\infty} \frac{\prob[\Theta_s X_s > x, \xtheta > x]}{\prob[\xinf > x]} &= 0. \label{eq: approx indep}
\end{align}

Choose any $\delta>0$. Then
$$\prob\left[\xinf>(1+\delta)x\right]\leq\prob\left[\xdnk>x\right]+\prob\left[\xupk>\delta x\right],$$
and from~\eqref{eq: neg tail one} and the regular variation of the tail of $\xinf$, we have
$$\lim_{m\to\infty}\liminf_{x\to\infty}\frac{\prob[\xdnk>x]}{\prob[\xinf>x]}\geq 1.$$
Further, using the trivial bound $\prob[\xdnk>x] \le \prob[\xinf>x]$, we have
\begin{equation}\label{eq: lower tail}
1 \leq \lim_{m\to\infty} \liminf_{x\to\infty} \frac{\prob[\xdnk>x]}{\prob[\xinf>x]} \leq \lim_{m\to\infty} \limsup_{x\to\infty} \frac{\prob[\xdnk>x]}{\prob[\xinf>x]} \leq 1.
\end{equation}

We next replace $\prob[\xdnk>x]$ in the numerator by $\sum_{t=1}^m \prob[\xtheta>x]$. We obtain the upper bound first. From~\eqref{eq: comp up},~\eqref{eq: approx indep} and~\eqref{eq: lower tail}, we get
$$\limsup_{x\to\infty} \frac{\sum_{t=1}^m \prob[\xtheta>x]}{\prob[\xinf>x]} \leq 1$$ and letting $m\to\infty$, we get the upper bound. The lower bound follows using exactly similar lines, but using~\eqref{eq: comp dn} and the regular variation of the tail of $\xinf$ instead of~\eqref{eq: comp up}. Putting together, we get
\begin{equation}\label{eq: lower tail sum}
1 \leq \lim_{m\to\infty} \liminf_{x\to\infty} \frac{\sum_{t=1}^m \prob[\xtheta>x]}{\prob[\xinf>x]} \leq \lim_{m\to\infty} \limsup_{x\to\infty} \frac{\sum_{t=1}^m \prob[\xtheta>x]}{\prob[\xinf>x]} \leq 1.
\end{equation}

Then the result follows combining~\eqref{eq: neg tail two} and~\eqref{eq: lower tail sum}.
\end{proof}

We are now ready to prove Theorem~\ref{main result}.
\begin{proof}[Proof of Theorem~\ref{main result}]
Let $\nu$ be the law of $X_1$ and define the measure $\rho(\cdot)=\sum_{t=1}^{\infty}\prob[\Theta_t \in \cdot]$. As observed in Remark~\ref{rem: RW}, under the RW conditions, for all values of $\alpha$, we have $\sum_{t=1}^\infty \E[\Theta_t^{\alpha+\epsilon}] <\infty$. Thus, $\rho$ is a $\sigma$-finite measure. Also, by Proposition~\ref{prop: equiv}, we have $\nu\circledast\rho (x,\infty) = \sum_{t=1}^{\infty} \prob[\xtheta>x] \sim \prob[\xinf>x]$. Hence $\nu\circledast\rho$ has regularly varying tail of index $-\alpha$. As $\nu$ is a probability measure, by Remark~2.4 of \cite{JMRS:2009},~\eqref{conditionjmrs} holds. The RW condition implies~\eqref{conditionjmrs0}. Finally,~\eqref{conditionjmrs1} holds, since, for all $\beta\in\mathbb{R}$, we have, from~\eqref{eq: mellin}, $\int_0^{\infty} y^{\alpha+i\beta} \rho(dy) = \sum_{t=1}^{\infty} \E[\Theta_t^{\alpha+i\beta}] \neq 0$. Hence, by Theorem~\ref{JMRS}, $X_1$ has regularly varying tail of index $-\alpha$.
\end{proof}

As in Theorem~\ref{JMRS},~\eqref{eq: mellin} is necessary for Theorem~\ref{main result} and we give its converse below.
\begin{theorem} \label{thm: converse}
Let $\{\Theta_t,t\geq1\}$ be a sequence of positive random variables satisfying the condition~\eqref{RW1} or~\eqref{RW2}, for some $\alpha>0$, but $\sum_{t=1}^{\infty}\E[\Theta_t^{\alpha+i\beta_0}]=0$ for some $\beta_0\in\mathbb{R}$. Then there exists a sequence of i.i.d.\ positive random variables $\{X_t\}$, such that $X_1$ does not have a regularly varying tail, but $\xinf$ is finite almost surely and has regularly varying tail of index $-\alpha$.
\end{theorem}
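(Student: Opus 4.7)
The plan is to adapt the $\sigma$-finite converse construction of Theorem~\ref{JMRS} (as in Remark~\ref{rem: jmrs}) into a genuine probability measure by truncating below $1$. Fix $a,b\in\mathbb R$ with $0<a^2+b^2<1$, and set
$$g(x)=1+a\cos(\beta_0\log x)+b\sin(\beta_0\log x),$$
so $g$ takes values in $[1-\sqrt{a^2+b^2},\,1+\sqrt{a^2+b^2}]\subset(0,2)$. I would take $\{X_t\}$ i.i.d., independent of $\{\Theta_t\}$, with common density $f(x)=Kg(x)\alpha x^{-\alpha-1}\mathbbm 1_{[x\ge 1]}$, where $K>0$ is the normalizing constant. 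Write $\nu$ for the law of $X_1$ and $\rho=\sum_{t=1}^\infty\prob[\Theta_t\in\cdot\,]$, which is $\sigma$-finite with $\|\rho\|_\alpha:=\int y^\alpha\rho(dy)<\infty$ by the RW conditions and Remark~\ref{rem: RW}. Three easy verifications follow: (i) explicit integration shows $\prob[X_1>x]$ equals $x^{-\alpha}$ times a nonconstant trigonometric polynomial in $\log x$, so $\prob[X_1>tx]/\prob[X_1>x]$ fails to converge for generic $t>0$ and $X_1$ is not regularly varying; (ii) $\prob[X_1>x]\le R(x)$ for all $x>0$, with $R(x)=1\wedge(Cx^{-\alpha})$ (suitable $C$) a bounded regularly varying function of index $-\alpha$ satisfying the hypotheses of Lemmas~\ref{lem: tail} and~\ref{lem: approx indep}; (iii) $\xinf<\infty$ almost surely, via the moment bound $\E[X_1^\beta]<\infty$ for every $\beta<\alpha$, combined with subadditivity $(\sum a_t)^\beta\le\sum a_t^\beta$ (when $\alpha\le 1$, picking $\beta\in(0,\alpha)$ so that $\sum_t\E[\Theta_t^\beta]<\infty$ from RW1) or Minkowski's inequality (when $\alpha>1$, picking $\beta\in[1,\alpha)$ so that $\sum_t(\E[\Theta_t^\beta])^{1/\beta}<\infty$ from RW2 via Lyapunov).

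The heart of the argument is to show $\nu\circledast\rho$ has regularly varying tail of index $-\alpha$; this is where the hypothesis $\sum_t\E[\Theta_t^{\alpha+i\beta_0}]=0$ enters decisively. Splitting $\sum_t\prob[\Theta_tX_t>x]=\int_0^\infty\prob[X_1>x/u]\,\rho(du)$ according to $u\le x$ (using $\prob[X_1>x/u]=K\int_{x/u}^\infty g(v)\alpha v^{-\alpha-1}dv$) and $u>x$ (where $\prob[X_1>x/u]=1$), applying Fubini, and changing variables $v=x/u$ leads, after cancellation of the $\rho([x,\infty))$ contributions, to
$$x^\alpha\sum_{t=1}^\infty\prob[\Theta_tX_t>x]=K\alpha\int_0^x g(x/u)\,u^{\alpha-1}\,\rho((u,\infty))\,du.$$
The constant-in-$g$ contribution, $\int_0^x u^{\alpha-1}\rho((u,\infty))\,du$, tends monotonically to $\int_0^\infty u^{\alpha-1}\rho((u,\infty))\,du=\|\rho\|_\alpha/\alpha$ by Fubini. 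Writing the oscillation as $g(x/u)-1=\operatorname{Re}[(a-ib)e^{i\beta_0\log x}e^{-i\beta_0\log u}]$, the corresponding term is a bounded-in-$x$ multiple of $\int_0^x e^{-i\beta_0\log u}u^{\alpha-1}\rho((u,\infty))\,du$; extending the integration to $(0,\infty)$ (absolutely integrable since $\int_0^\infty u^{\alpha-1}\rho((u,\infty))\,du=\|\rho\|_\alpha/\alpha<\infty$) and applying Fubini gives the closed form $(\alpha-i\beta_0)^{-1}\sum_t\E[\Theta_t^{\alpha-i\beta_0}]$, the complex conjugate of the quantity assumed to vanish, hence zero. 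Consequently $x^\alpha\sum_t\prob[\Theta_tX_t>x]\to K\|\rho\|_\alpha$, establishing regular variation of $\nu\circledast\rho$.

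It remains to transfer this asymptotics to $\prob[\xinf>x]$. Proposition~\ref{prop: equiv} is not directly applicable because it presupposes regular variation of $\xinf$, which is precisely what I am trying to prove; instead, I would rerun its argument with the dominator $R$ from (ii) substituting for $\prob[\xinf>x]$. Lemma~\ref{lem: tail} then gives $\lim_m\limsup_x x^\alpha\prob[\xupk>\delta x]=0$ and $\lim_m\limsup_x x^\alpha\sum_{t=m+1}^\infty\prob[\Theta_tX_t>x]=0$; Lemma~\ref{lem: approx indep} gives $x^\alpha\prob[\Theta_sX_s>x,\Theta_tX_t>x]\to 0$ for each $s\ne t$; and Lemma~\ref{lem: comp} sandwiches $\prob[\xdnk>x]$ and $\sum_{t=1}^m\prob[\Theta_tX_t>x]$ up to double-tail terms that are $o(x^{-\alpha})$ for every fixed $m$. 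Combining these with the splittings $\prob[\xinf>(1+\delta)x]\le\prob[\xdnk>x]+\prob[\xupk>\delta x]$ (for the upper bound, after the substitution $y=(1+\delta)x$) and $\prob[\xinf>x]\ge\prob[\xdnk>x]$ (for the lower bound), and using the asymptotics $x^\alpha\nu\circledast\rho(x,\infty)\to K\|\rho\|_\alpha$ from the preceding paragraph as the external input, yields $\liminf_x x^\alpha\prob[\xinf>x]\ge K\|\rho\|_\alpha$ and $\limsup_x x^\alpha\prob[\xinf>x]\le(1+\delta)^\alpha K\|\rho\|_\alpha$; sending $\delta\to 0$ completes the proof that $\prob[\xinf>x]\sim K\|\rho\|_\alpha x^{-\alpha}$. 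The principal obstacle is the middle paragraph: carefully justifying the Fubini interchanges and verifying the oscillation cancellation that rests on $\sum_t\E[\Theta_t^{\alpha+i\beta_0}]=0$; once that is clean, the final transfer is essentially a rerun of the existing argument.
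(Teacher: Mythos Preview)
Your proposal is correct and follows essentially the same strategy as the paper, but executes it in a more explicit, self-contained way. Two differences are worth noting. First, the paper invokes the converse part of Theorem~\ref{JMRS} as a black box to obtain the $\sigma$-finite measure $\nu=g\,d\nu_\alpha$, then truncates below some $b>1$ and places an atom at $1$ to produce a probability measure $\mu$, and finally checks $\mu\circledast\rho(x,\infty)\sim\nu\circledast\rho(x,\infty)$; you instead build the probability law $\nu$ directly with density $Kg(x)\alpha x^{-\alpha-1}\mathbbm 1_{[x\ge1]}$ (enforcing $a^2+b^2<1$ strictly so that $g>0$), and carry out the Mellin cancellation by hand to obtain $x^\alpha(\nu\circledast\rho)(x,\infty)\to K\|\rho\|_\alpha$. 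Your computation is clean and the Fubini steps are justified by $\int_0^\infty w^{\alpha-1}\rho((w,\infty))\,dw=\|\rho\|_\alpha/\alpha<\infty$, so the ``principal obstacle'' you flag is not a real difficulty. Second, the transfer from $\nu\circledast\rho$ to $\prob[\xinf>\cdot]$ that you rerun is isolated in the paper as a separate result, Proposition~\ref{prop: equiv alt}, which takes as hypothesis precisely the regular variation of $\sum_t\prob[\Theta_tX_t>x]$ (not of $\xinf$) and concludes both the tail equivalence and the almost sure finiteness; so your step~(iii) is redundant, and the slight awkwardness in your case split at $\alpha=1$ is moot. In short, the paper's proof is more modular (black-box JMRS plus a cited proposition), while yours is more computational but avoids the two-stage truncation; both buy the same conclusion.
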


The proof depends on an analogue of Proposition~\ref{prop: equiv}.
\begin{proposition} \label{prop: equiv alt}
Let $\{X_t,t\geq1\}$ be a sequence of identically distributed, pairwise asymptotically independent positive random variables and $\{\Theta_t,t\geq1\}$ be a sequence of positive random variables satisfying the condition~\eqref{RW1} or~\eqref{RW2} for some $\alpha>0$ and independent of $\{X_t\}$. If $\sum_{t=1}^\infty \prob[\Theta_tX_t>x]$ is regularly varying of index $-\alpha$, then, as $x\to\infty$,
$$\sum_{t=1}^\infty \prob[\Theta_tX_t>x] \sim \prob[\xinf>x]$$
and $\xinf$ is finite with probability one.
\end{proposition}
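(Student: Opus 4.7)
The plan is to re-run the argument of Proposition~\ref{prop: equiv}, with the regularly varying function $R(x) := \sum_{t=1}^\infty \prob[\Theta_t X_t > x]$ taking over the role that $\prob[\xinf > x]$ played there. The extra work, compared with Proposition~\ref{prop: equiv}, is to produce a regularly varying dominator for $\prob[X_1 > x]$ starting from $R$, and to derive the a.s.\ finiteness of $\xinf$ along the way; both were free in the earlier proposition because $\prob[\xinf > x]$ itself was the regularly varying quantity by hypothesis.

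To dominate $\prob[X_1 > x]$, I would pick $\eta > 0$ with $\prob[\Theta_1 > \eta] > 0$; by independence, $R(\eta x) \ge \prob[\Theta_1 X_1 > \eta x] \ge \prob[\Theta_1 > \eta]\,\prob[X_1 > x]$, so $\prob[X_1 > x] \le R(\eta x)/\prob[\Theta_1 > \eta]$, and truncating at $1$ gives a bounded decreasing regularly varying function of index $-\alpha$ that dominates $\prob[X_1 > x]$ in the sense of~\eqref{eq: domination} and remains bounded away from $0$ on every bounded subinterval of $(0,\infty)$. Feeding this into Lemmas~\ref{lem: tail} and~\ref{lem: approx indep} (and converting the conclusions back to $R$ via $R(\eta x) \sim \eta^{-\alpha} R(x)$) will yield
\begin{align*}
\lim_{m\to\infty} \limsup_{x\to\infty} \frac{\prob[\xupk > x]}{R(x)} &= 0,\\
\lim_{m\to\infty} \limsup_{x\to\infty} \sum_{t=m+1}^\infty \frac{\prob[\Theta_t X_t > x]}{R(x)} &= 0,
\end{align*}
together with $\prob[\Theta_s X_s > x, \Theta_t X_t > x] = o(R(x))$ for every $s \neq t$. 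Since $R(x) \to 0$ as $x \to \infty$, the first display forces $\prob[\xupk > x] \to 0$ for all sufficiently large fixed $m$, whence $\xupk < \infty$ a.s.; combined with the a.s.\ finiteness of $\xdnk$, this gives $\xinf < \infty$ a.s.

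The tail equivalence $\prob[\xinf > x] \sim R(x)$ then follows by the same argument as in Proposition~\ref{prop: equiv}. For the upper bound I would split $\prob[\xinf > x] \le \prob[\xdnk > (1-\delta) x] + \prob[\xupk > \delta x]$ for $\delta \in (0,1)$, bound the first summand via the upper bound in Lemma~\ref{lem: comp} (the single-index sum is at most $R((1-\delta)x) \sim (1-\delta)^{-\alpha} R(x)$, and the finitely many joint-tail terms are $o(R(x))$ each for fixed $m$), and control $\prob[\xupk > \delta x]/R(x)$ using the first display together with regular variation; letting $m \to \infty$ and then $\delta \to 0$ gives $\limsup_x \prob[\xinf > x]/R(x) \le 1$. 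For the lower bound I would use $\prob[\xinf > x] \ge \prob[\xdnk > x]$ and the lower bound in Lemma~\ref{lem: comp} to obtain $\prob[\xinf > x]/R(x) \ge 1 - \sum_{t > m}\prob[\Theta_t X_t > x]/R(x) - o(1)$, whose liminf in $x$ tends to $1$ as $m \to \infty$ by the second display. The main obstacle I anticipate is simply the careful bookkeeping of the two-parameter limit in $x$ and $m$ across the splittings; the heavy analytic lifting is already encoded in Lemmas~\ref{lem: tail}--\ref{lem: comp}.
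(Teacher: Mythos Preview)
Your proposal is correct and follows essentially the same route as the paper: both dominate $\prob[X_1>x]$ by a constant multiple of $R$ via the $\eta$-trick, invoke Lemmas~\ref{lem: tail}--\ref{lem: comp} to obtain the three key asymptotic estimates, and then run the same $m,\delta$ splitting argument as in Proposition~\ref{prop: equiv}. The only cosmetic differences are that the paper first passes through the intermediate statement $\prob[\xdnk>x]/R(x)\to 1$ and deduces almost-sure finiteness at the very end from $\prob[\xinf>x]\sim R(x)\to 0$, whereas you extract almost-sure finiteness earlier from the tail-sum estimate; your explicit truncation at $1$ to get a bounded dominator is a good touch that the paper leaves implicit.
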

\begin{proof}
We shall denote $R(x) = \sum_{t=1}^\infty \prob[\Theta_t X_t > x]$. As $\Theta_1$ is a positive random variable, choose $\eta>0$ such that $\prob[\Theta_1>\eta]>0$. Then, for all $x>0$, we have $R(x) \ge \prob[\Theta_1X_1>\eta x, \Theta_1>\eta] \ge \prob[X_1>x]\prob[\Theta_1>\eta]$
and using the regular variation of $R$, the tail of $X_1$ is dominated by a constant multiple of $R$. Also, note that, $R$ is bounded and stays bounded away from $0$ on any bounded interval. Then, from Lemmas~\ref{lem: tail} and~\ref{lem: approx indep}, we have
\begin{align}
\lim_{m\to\infty} \limsup_{x\to\infty} \frac{\prob[\xupk>x]}{R(x)} &= 0, \label{eq: neg tail one alt}\\
\lim_{m\to\infty} \limsup_{x\to\infty} \sum_{t=m+1}^\infty \frac{\prob[\xtheta>x]}{R(x)} &= 0, \label{eq: neg tail two alt}
\intertext{and, for any $s\neq t$,}
\lim_{x\to\infty} \frac{\prob[\Theta_s X_s > x, \xtheta > x]}{R(x)} &= 0. \label{eq: approx indep alt}
\end{align}

Using~\eqref{eq: neg tail two alt}, we have
$$1 \le \lim_{m\to\infty} \liminf_{x\to\infty} \frac{\sum_{t=1}^m \prob[\xtheta>x]}{R(x)} \le \lim_{m\to\infty} \limsup_{x\to\infty} \frac{\sum_{t=1}^m \prob[\xtheta>x]}{R(x)} \le 1.$$
As in the proof of Proposition~\ref{prop: equiv}, using~\eqref{eq: comp up},~\eqref{eq: comp dn} and~\eqref{eq: approx indep alt}, the above inequalities reduce to
$$1 \le \lim_{m\to\infty} \liminf_{x\to\infty} \frac{\prob[\xdnk>x]}{R(x)} \le \lim_{m\to\infty} \limsup_{x\to\infty} \frac{\prob[\xdnk>x]}{R(x)} \le 1$$
and the tail equivalence follows using~\eqref{eq: neg tail one alt} and the regular variation of $R$. Since $R(x)\to 0$, the tail equivalence also shows the almost sure finiteness of $\xinf$.
\end{proof}

Next, we prove Theorem~\ref{thm: converse} using the converse part of Theorem~\ref{JMRS}.
\begin{proof}[Proof of Theorem~\ref{thm: converse}]
Define the measure $\rho(\cdot)=\sum_{t=1}^{\infty}\prob[\Theta_t\in\cdot]$. By the RW moment condition, the measure $\rho$ is $\sigma$-finite. Further, we have, $\int_0^\infty y^{\alpha+i\beta_0} \rho(dy)=0$. Now by converse part of Theorem~\ref{JMRS}, there exists a $\sigma$-finite measure $\nu$, whose tail is not regularly varying, but $\nu\circledast\rho$ has regularly varying tail. Next, define a probability measure $\mu$ using the $\sigma$-finite measure $\nu$ as in Theorem~3.1 of~\cite{JMRS:2009}. Choose $b>1$, such that $\nu(b,\infty)\leq 1$ and define a probability measure on $(0,\infty)$ by
$$\mu(B)=\nu(B\cap(b,\infty))+(1-\nu(b,\infty))\mathbbm{1}_{B}(1),  \text{ where $B$ is Borel subset of $(0,\infty)$.}$$

First observe that
$$\mu(y,\infty) =
\begin{cases}
\nu(y,\infty), &\text{for $y>b$,}\\
\nu(b,\infty), &\text{for $1<y\le b$,}\\
1, &\text{for $y\le 1$.}
\end{cases}$$
Thus, $\mu$ does not have a regularly varying tail and
\begin{align*}
\mu\circledast\rho(x,\infty) = &\int_0^\infty \mu(x/u,\infty) \rho(du)\\
= &\int_0^{x/b} \nu(x/u,\infty) \rho(du) + \nu(b,\infty) \rho[x/b,x) + \rho[x,\infty)\\
= &\nu\circledast\rho(x,\infty) - 2 x^{-\alpha} \int_{x/b}^\infty u^\alpha \rho(du)\\
&\quad + \nu(b,\infty) \rho[x/b,x) + \rho[x,\infty).
\end{align*}
Now, using the bound from~\eqref{eq: bd nu} and~\eqref{eq: prod conv}, the second term is bounded by, for $x>b$,
$$2 \, \frac{\nu\circledast\rho(x,\infty)}{\|\rho\|_\alpha} \int_{x/b}^\infty u^{\alpha+\epsilon} \rho(du) = \lito(\nu\circledast\rho(x,\infty))$$
as $x\to\infty$, since $\int_0^\infty u^{\alpha+\epsilon} \rho(du) < \infty$ by the RW conditions. The sum of the last two terms can be bounded by
$$\frac{1+\nu(b,\infty)b^{\alpha+\epsilon}}{x^{\alpha+\epsilon}} \int_0^\infty u^{\alpha+\epsilon} \rho(du) = \lito(\nu\circledast\rho(x,\infty)),$$
as $x\to\infty$, since $\nu\circledast\rho(x,\infty)$ is regularly varying of index $-\alpha$. Thus, $\mu\circledast\rho(x,\infty) \sim \nu\circledast\rho(x,\infty)$ as $x\to\infty$ and hence is regularly varying of index $-\alpha$.

Let $X_t$ be an i.i.d.\ sequence with common law $\mu$. Then, $X_1$ does not have regularly varying tail. Further, by Proposition~\ref{prop: equiv alt}, $\xinf$ is finite with probability one and $\prob[\xinf>x] \sim \mu\circledast\rho(x,\infty)$ is regularly varying of index $-\alpha$.
\end{proof}
\end{section}

\end{document}